\documentclass{amsart}
\usepackage{amssymb}

\theoremstyle{plain}
\newtheorem{theorem}{Theorem}[section]
\newtheorem{corollary}[theorem]{Corollary}
\newtheorem{definition}[theorem]{Definition}
\newtheorem{example}[theorem]{Example}
\newtheorem{proposition}[theorem]{Proposition}
\newtheorem{lemma}[theorem]{Lemma}
\numberwithin{equation}{section}

\begin{document}

\title[Connections between partitions]{Connections between colored restricted 
$b$-ary and ordinary partitions}

\author{Karl Dilcher}
\address{Department of Mathematics and Statistics\\
         Dalhousie University\\
         Halifax, Nova Scotia, B3H 4R2, Canada}
\email{dilcher@mathstat.dal.ca}
\author{Larry Ericksen}
\address{1212 Forest Drive, Millville, NJ 08332-2512, USA}
\email{LE22@cornell.edu}
\keywords{Binary partition, $b$-ary partition, restricted partition, 
polynomial analogue, generating function, recurrence relation}
\subjclass[2010]{Primary 11P81; Secondary 11B37, 11B83}

\date{}

\setcounter{equation}{0}

\begin{abstract}
We establish various connections between classes of colored and bounded 
ordinary partitions on one hand, and colored but not necessarily bounded 
binary and $b$-ary partitions on the other hand. Many of these results are 
based on special recurrence relations, some of which are new. We also obtain
several classes of identities for sequences of colored $b$-ary partitions,
and there are a few results concerning compositions.
\end{abstract}

\maketitle

\section{Introduction}\label{sec:1}

Integer partitions can be restricted in a variety of ways, for instance by
limiting the number or the sizes of their parts. This can also be done for
binary partitions, that is, for writing an integer $n\geq 1$ as a sum of
powers of 2, repetitions permitted. Colored ordinary and binary partitions,
both restricted and unrestricted, have also been studied in recent years.
Finally, binary partitions can be extended in an obvious way to $b$-ary
partitions for a fixed integer base $b\geq 2$. A brief historical overview
of these concepts can be found in \cite{DE10}.

So far, ordinary partitions and binary (or $b$-ary) partitions and their
respective restricted versions have mostly been considered as rather different
objects. The only connection between the two of which we are aware is the 
following result.
 
\begin{proposition}[Hirschhorn and Sellers \cite{HS}]\label{prop:1.1}
Given an integer $n\geq 1$, the number of binary partitions of $n$ is
equal to the number of partitions
\[
n=p_k+p_{k-1}+\cdots+p_1,\qquad p_k\geq p_{k-1}\geq\cdots\geq p_1\geq 1,
\]
satisfying
\[
p_{i+1}\geq p_i+p_{i-1}+\cdots+p_1\quad\hbox{for all}\quad 1\leq i\leq k-1.
\]
\end{proposition}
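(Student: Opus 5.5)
We would prove Proposition~\ref{prop:1.1} with an explicit bijection, verified through a recursion common to both sides. Let $b(n)$ be the number of binary partitions of $n$ and $h(n)$ the number of partitions in the statement (``HS-partitions''); we set $b(0)=h(0)=1$. The standard binary recurrences are $b(2m+1)=b(2m)$ and $b(2m)=b(2m-1)+b(m)$. The first holds because a binary partition of an odd number contains a part $1$, and removing it gives a bijection. For the second, split the binary partitions of $2m$ according to whether they contain a $1$: those that do correspond to binary partitions of $2m-1$, and those that do not are exactly the doubles of binary partitions of $m$. The plan is to prove that $h$ satisfies the same recurrences and initial value, so that $h(n)=b(n)$ by induction.

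\textbf{Step 1: setup.} Write $s_i=p_1+\cdots+p_i$. The HS condition says $p_{i+1}\ge s_i$, which is equivalent to $s_{i+1}\ge 2s_i$. So an HS-partition of $n$ is the same thing as a chain $1\le s_1$, $2s_i\le s_{i+1}$, $s_k=n$, read from the top: $s_k=n$, $s_{k-1}\le n/2$, and so on. Deleting the top part $p_k$ leaves an HS-partition of $s_{k-1}$, and $s_{k-1}$ can be any $j$ with $0\le j\le \lfloor n/2\rfloor$, where $j=0$ means $k=1$. This gives
\[
h(n)=\sum_{j=0}^{\lfloor n/2\rfloor} h(j),
\]
with the empty partition contributing $h(0)=1$.

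\textbf{Step 2: matching recurrences.} From this sum formula, $h(2m+1)=h(2m)$ because $\lfloor (2m+1)/2\rfloor=\lfloor 2m/2\rfloor=m$. Also $h(2m)-h(2m-1)=h(m)$, because the upper limit rises from $m-1$ to $m$. These are precisely the binary recurrences, and $h(0)=b(0)=1$, so induction on $n$ gives $h(n)=b(n)$ for all $n\ge1$. Alternatively, one can derive $b(n)=\sum_{j\le n/2} b(j)$ directly. Sort a binary partition of $n$ by the number $r$ of $1$'s, which must satisfy $r\equiv n\pmod 2$. Removing the $1$'s and halving the remaining parts yields a binary partition of $(n-r)/2=j$, and $j$ runs over $0,\dots,\lfloor n/2\rfloor$. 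This also gives an explicit bijection, defined recursively: the top part $p_k=n-j$ corresponds to the number $n-2j$ of $1$'s, and one recurses on $j$.

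\textbf{Main obstacle.} The only delicate point is Step~1: checking that the cumulative-sum reformulation is exact, and in particular that the HS condition imposes nothing beyond $s_{k-1}\le n/2$ when the top part is peeled off. It also needs care with the boundary cases $j=0$ and $k=1$, so that the empty partition is counted exactly once. Once this is settled, both sides satisfy the same sum recursion and the rest of the argument is routine.
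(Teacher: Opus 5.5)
The paper does not prove this proposition; it quotes it from \cite{HS}, so there is no in-paper argument to compare with. Your proof is correct and complete. The point you flag as delicate takes one line. Since $p_{i+1}\ge s_i\ge p_i$, the ordering $p_{i+1}\ge p_i$ is implied by the HS condition. So the chains $1\le s_1$, $s_{i+1}\ge 2s_i$, $s_k=n$ are exactly the HS-partitions of $n$. Appending a top part $n-j$ to an HS-partition of $j\le n/2$ is always admissible, because $n-j\ge j$ bounds both the old total and the old largest part, and $n-j\ge 1$.

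Your route also fits the paper's own machinery.

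\begin{itemize}
\item The facts you use about binary partitions, $b(n)=\sum_{j=0}^{\lfloor n/2\rfloor}b(j)$, $b(2m+1)=b(2m)$ and $b(2m)-b(2m-1)=b(m)$, are exactly \eqref{6.4}--\eqref{6.6} of Corollary~\ref{cor:6.3} with $b=2$.
\item The paper derives those from the functional equation \eqref{5.9} with $\rho=1$, i.e.\ $F(q)=F(q^2)/(1-q)$.
\item Your Step~1 says precisely that the generating function of HS-partitions satisfies the same functional equation. A power series with constant term $1$ is uniquely determined by it, so this gives a one-line generating-function version of your induction.
\end{itemize}

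The same peeling argument handles the $b$-ary generalization mentioned after the proposition. The condition $p_{i+1}\ge (b-1)(p_i+\cdots+p_1)$ is equivalent to $s_{i+1}\ge b s_i$. This gives $h(n)=\sum_{j=0}^{\lfloor n/b\rfloor}h(j)$, which matches \eqref{6.4} for general $b$.

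Finally, your recursive bijection unwinds to a closed form. The HS-partition with partial sums $s_1,\dots,s_k$ corresponds to the binary partition with $s_1\ge 1$ copies of the largest part $2^{k-1}$ and $s_{i+1}-2s_i\ge 0$ copies of $2^{k-1-i}$ for $1\le i\le k-1$. The total telescopes to $s_k=n$. Conversely, the multiplicities of a binary partition determine the chain via $s_{i+1}=2s_i+(\text{multiplicity of }2^{k-1-i})$. So the bijection can be stated without any recursion.
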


Hirschhorn and Sellers actually proved the general case for $b$-ary partitions,
and Sloane and Sellers \cite{SS} called such partitions {\it non-squashing}.
See also \cite{AS} and \cite[A018819]{OEIS}.

It is the purpose of this paper to identify and study some further close
connections between the two basic classes of partitions, namely ordinary and
$b$-ary partitions and their respective colored and bounded analogues.
We begin with the interesting infinite product
\begin{equation}\label{1.1}
\prod_{j=0}^\infty\left(1+q^{2^j}\right)=\frac{1}{1-q}\qquad (|q|<1),
\end{equation}
which can be found, for instance, in the tables \cite[(89.30.2)]{Ha} or 
\cite[(6.2.3.1)]{PrE}. If we write the right-hand side as $1+q+q^2+\cdots$ and
expand the left-hand side, we see that \eqref{1.1} is equivalent to the 
uniqueness of binary expansions. This fact can also be seen as a result on
binary partitions, and \eqref{1.1} turns out to be the main tool in the first
part of this paper. This will be done in Section~\ref{sec:2}, along with a 
$b$-ary analogue of \eqref{1.1}.

However, the main purpose of Section~\ref{sec:2} is to prove some connections
between restricted ordinary partitions and colored binary and $b$-ary 
partitions. In Section~\ref{sec:3} we recall some known generating functions
and recurrence relations for sequences of restricted $b$-ary partitions and
their polynomial analogues. These recurrences are then used in 
Section~\ref{sec:4} to establish connections between various types of 
restricted colored $b$-ary and restricted colored ordinary partitions.

The final two sections deal with unrestricted colored $b$-ary partitions. In
Section~\ref{sec:5}, we derive some relevant new recurrence relations, which
are then used in Section~\ref{sec:6} to obtain various analogues to results in
Section~\ref{sec:4}. Finally, we derive several classes of identities for 
sequences of colored $b$-ary partitions.

\section{Colored $b$-ary partition functions}\label{sec:2}

In dealing with {\it colored} ordinary or $b$-ary partitions, we assign 
different ``colors" by attaching numbered subscripts to the parts of a
partition. Using the exposition in \cite{DE10}, we give the following 
definition and example.

\begin{definition}\label{def:2.1}
Let $b\geq 2$ and $\rho\geq 1$ be integers, and let
$\Lambda=(\lambda_1,\ldots,\lambda_\rho)$ be a finite integer sequence with
$1\leq\lambda_1\leq\cdots\leq\lambda_\rho$. Then a $\Lambda$-restricted 
$\rho$-color $b$-ary partition of an integer $n\geq 1$ is a set
of powers $b^j$ whose sum is $n$, and where for a given integer 
$j\geq 0$, up to $\lambda_r$ parts $b^j$ can be assigned the $r$th color,
for $1\leq r\leq\rho$.
\end{definition}

In other words, each part $b^j$ can be assigned the $r$th color at most
$\lambda_r$ times, for $1\leq r\leq\rho$. 
If $C_b^\Lambda(n)$ denotes the number of partitions of $n$ as defined in 
Definition~\ref{def:2.1}, then the generating function is
\begin{equation}\label{2.1}
\sum_{n=0}^\infty C_b^\Lambda(n)q^n
= \prod_{j=0}^\infty\bigg(1+q^{b^j}+\cdots+q^{\lambda_1\cdot b^j}\bigg)\cdots
\bigg(1+q^{b^j}+\cdots+q^{\lambda_\rho\cdot b^j}\bigg). 
\end{equation}

We now illustrate Definition~\ref{def:2.1} with a concrete example.

\begin{example}\label{ex:2.2}
{\rm Let $b=2$ and $\Lambda=(2,3)$, and thus $\rho=2$. Then the 
$\Lambda$-restricted 2-color binary partitions of $n=4$ are}

$4_1,\;4_2,\;2_1+2_1,\;2_1+2_2,\;2_2+2_2,\;2_1+1_1+1_1,\;2_1+1_1+1_2,\;
2_1+1_2+1_2,$

$2_2+1_1+1_1,\;2_2+1_1+1_2,\; 2_2+1_2+1_2,\;1_1+1_1+1_2+1_2,\;
1_1+1_2+1_2+1_2,$

\noindent
{\rm where a subscript indicates the color of a part (e.g., $1=red$, $2=blue$),
and by convention we 
write each partition in non-increasing order of its parts.
Counting the partitions in this example, we see that $C_2^\Lambda(4)=13$. 
On the other hand, by \eqref{2.1} we have}
\begin{align}
\sum_{n=0}^\infty C_2^\Lambda(n)q^n
&= \prod_{j=0}^\infty\bigg(1+q^{2^j}+q^{2\cdot 2^j}\bigg)
\bigg(1+q^{2^j}+q^{2\cdot 2^j}+q^{3\cdot 2^j}\bigg)\label{2.2} \\
&= 1+2q+5q^2+7q^3+13q^4+17q^5+26q^6+\ldots,\nonumber
\end{align}
{which confirms that $C_2^\Lambda(4)=13$.}
\end{example}

We are now ready to state and prove the main results of this section.

\begin{proposition}\label{prop:2.4}
Given positive integers $k$ and $n$, the following are identical:
\begin{enumerate}
\item[(a)] The number of partitions of $n$ with parts not exceeding $k$.
\item[(b)] The number of $k$-color binary partitions in which a part with
color $i$ has multiplicity $i$.
\end{enumerate}
\end{proposition}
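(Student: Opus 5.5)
The plan is to compare generating functions, using the product identity \eqref{1.1} with $q$ replaced by $q^i$. First I will pin down how to read (b). A part $2^j$ with color $i$ either does not occur at all, or it occurs exactly $i$ times. So for each $j\geq 0$ and each color $i$ the contribution is $0$ or $i\cdot 2^j$, and these choices are independent. Hence the generating function for (b) is
\[
\prod_{i=1}^{k}\prod_{j=0}^{\infty}\left(1+q^{i\cdot 2^j}\right).
\]
This is the same kind of product as in \eqref{2.1}, except that each factor now contains only the two terms $1$ and $q^{i\cdot 2^j}$.

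Next, for each fixed $i$, I substitute $q\mapsto q^i$ in \eqref{1.1}, which is valid for $|q|<1$. This gives
\[
\prod_{j=0}^{\infty}\left(1+q^{i\cdot 2^j}\right)=\frac{1}{1-q^i}.
\]
Taking the product over $1\leq i\leq k$ turns the generating function for (b) into $\prod_{i=1}^k (1-q^i)^{-1}$. This is the classical generating function for partitions into parts not exceeding $k$, that is, for (a). Comparing coefficients of $q^n$ finishes the proof.

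For a bijective version, I would use the binary expansion of the multiplicity. If part $i$ occurs $m_i$ times in a partition counted by (a), write $m_i=\sum_j \epsilon_j 2^j$ with $\epsilon_j\in\{0,1\}$. For each $j$ with $\epsilon_j=1$, take the part $2^j$ with color $i$, repeated $i$ times. The total is unchanged, since $i\cdot m_i=\sum_j \epsilon_j\, i\cdot 2^j$. Uniqueness of binary expansions, which is the combinatorial content of \eqref{1.1}, makes this map invertible.

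The only real obstacle is interpretive. I have to confirm that ``multiplicity $i$'' means that each occurring part $2^j$ of color $i$ appears exactly $i$ times, with distinct powers allowed within the same color. This is the reading that produces the factor $1+q^{i2^j}$. Once this is settled, the argument is just the substitution into \eqref{1.1} described above.
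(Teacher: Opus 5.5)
Your proof is correct and follows the paper's argument: the paper obtains this statement as the case $b=2$ of Proposition~\ref{prop:2.7}, substituting $q\mapsto q^i$ into the product identity (here \eqref{1.1}, in general \eqref{2.4}) and multiplying over $1\le i\le k$ to recover \eqref{2.5}. Your bijective version, which uses the binary expansions of the multiplicities $m_i$, is a correct explicit form of the same identity that the paper does not spell out; it reproduces the seven partitions of Example~\ref{ex:2.5}.
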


To supplement this result, we note that by a well-known property of restricted
partitions, (a) is the same as
\begin{enumerate}
\item[(a$'$)] The number of partitions of $n$ with at most $k$ parts.
\end{enumerate}

\begin{example}\label{ex:2.5}
{\rm Let $k=3$ and $n=6$. Then the restricted ordinary partitions satisfying
(a) and (a$'$) are respectively
\begin{gather*}
3+3,\;3+2+1,\;3+1+1+1,\;2+2+2,\;2+2+1+1,\\
2+1+1+1+1,\;1+1+1+1+1+1,
\end{gather*}
and
\[
6,\;5+1,\;4+2,\;4+1+1,\;3+3,\;3+2+1,\;2+2+2,
\]
and we see that in both cases there are 7 partitions.

The 3-color binary partitions satisfying (b) are as follows, where we put
equal parts in parentheses for easier verification:
\begin{gather*}
4_1+2_1,\; 4_1+(1_2+1_2),\; 2_1+(2_2+2_2),\; (2_3+2_3+2_3),\; 
(2_2+2_2)+(1_2+1_2),\\
 2_1+1_1+(1_3+1_3+1_3),\; 1_1+(1_2+1_2)+(1_3+1_3+1_3).
\end{gather*}
There are again seven such partitions, in agreement with 
Proposition~\ref{prop:2.4}.}
\end{example}

Proposition~\ref{prop:2.4} is a special case of a more general result that 
deals with $k$-color $b$-ary partitions for arbitrary bases $b\geq 2$. For the
proof we need the following infinite product.

\begin{lemma}\label{lem:2.6}
For integers $b\geq 2$ and $k\geq 0$ and for complex $q$ with $|q|<1$, we have
\begin{equation}\label{2.3}
\prod_{j=0}^k\left(1+q^{b^j}+q^{2b^j}+\cdots+q^{(b-1)b^j}\right)
=\frac{1-q^{b^{k+1}}}{1-q},
\end{equation}
and consequently
\begin{equation}\label{2.4}
\prod_{j=0}^\infty\left(1+q^{b^j}+q^{2b^j}+\cdots+q^{(b-1)b^j}\right)
=\frac{1}{1-q}.
\end{equation}
\end{lemma}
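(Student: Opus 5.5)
The plan is to prove \eqref{2.3} by rewriting each factor as a quotient of geometric-series type and letting the resulting product telescope. Since $1+x+\cdots+x^{b-1}=(1-x^b)/(1-x)$ for $x\neq 1$, take $x=q^{b^j}$; this gives
\[
1+q^{b^j}+q^{2b^j}+\cdots+q^{(b-1)b^j}=\frac{1-q^{b^{j+1}}}{1-q^{b^j}},
\]
which is valid because $0<|q|<1$ forces $q^{b^j}\neq 1$. The case $q=0$ is trivial, since both sides then equal $1$.

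Multiplying these quotients over $0\le j\le k$, the numerator of the $j$th factor cancels the denominator of the $(j+1)$st. Only the denominator $1-q$ from $j=0$ and the numerator $1-q^{b^{k+1}}$ from $j=k$ survive, which is exactly \eqref{2.3}. If one prefers to avoid division, the same argument can be run as an induction on $k$. The case $k=0$ is the finite geometric sum. For the inductive step, multiply $(1-q^{b^{k+1}})/(1-q)$ by the factor with $j=k+1$ and use $(1-y)(1+y+\cdots+y^{b-1})=1-y^b$ with $y=q^{b^{k+1}}$.

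To obtain \eqref{2.4}, let $k\to\infty$ in \eqref{2.3}. Since $|q|<1$ and $b^{k+1}\to\infty$, we have $q^{b^{k+1}}\to 0$, so the right-hand side tends to $1/(1-q)$. The infinite product is defined as the limit of its partial products, so it converges to this value. It also converges in the usual (absolute) sense, because $\sum_j |q|^{b^j}<\infty$.

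There is no real obstacle here; the only care needed is at the limiting step. Alternatively, one can give a combinatorial reading of \eqref{2.3}: the coefficient of $q^n$ on the left counts representations $n=\sum_{j\le k} d_j b^j$ with digits $0\le d_j\le b-1$. By uniqueness of base-$b$ expansions, this coefficient is $1$ for $0\le n<b^{k+1}$ and $0$ otherwise, which matches the right-hand side. This mirrors the remark on \eqref{1.1}.
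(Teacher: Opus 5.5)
Your proposal is correct and follows the paper's proof: the paper also multiplies the finite product by $1-q$ and telescopes using $(1-y)(1+y+\cdots+y^{b-1})=1-y^b$ with $y=q^{b^j}$, then lets $k\to\infty$. The paper likewise mentions the base-$b$ uniqueness argument, though it applies it to \eqref{2.4} rather than to the finite product \eqref{2.3}.
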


\begin{proof}
We multiply the left-hand side of \eqref{2.3} by $1-q$. Then we get
consecutively
\begin{align*}
(1-q)\left(1+q+q^2+\cdots+q^{b-1}\right) &= 1-q^b,\\
\left(1-q^b\right)\left(1+q^b+q^{2b}+\cdots+q^{(b-1)b}\right) &= 1-q^{b^2},
\end{align*}
and so on, until
\[
\left(1-q^{b^k}\right)\left(1+q^{b^k}+q^{2b^k}+\cdots+q^{(b-1)b^k}\right)
=1-q^{b^{k+1}}.
\]
This proves \eqref{2.3} and by taking the limit as $k\to\infty$, we 
immediately get \eqref{2.4}.

An alternative proof of \eqref{2.4} is similar to the remark following 
\eqref{1.1}: If we expand the left-hand side of \eqref{2.4}, then by the
uniqueness of the base-$b$ expansion for any $b\geq 2$, the coefficients of
all $q^n$, for $n\geq 1$, will be 1. This gives the right-hand side of 
\eqref{2.4}.
\end{proof}

The identity \eqref{2.4} was earlier used in \cite[Cor.~8.1]{DE10}. Also, it
is clear that \eqref{1.1} is the special case $b=2$ of \eqref{2.4}.

We are now ready to state and prove the following extension of 
Proposition~\ref{prop:2.4}.

\begin{proposition}\label{prop:2.7}
Given positive integers $k$ and $n$, the following are identical:
\begin{enumerate}
\item[(a)] The number of partitions of $n$ with parts not exceeding $k$.
\item[(b)] For any integer $b\geq 2$, the number of $k$-color $b$-ary 
partitions in which a part with color $i$ has multiplicity in the set
$\{i, 2i,\ldots, (b-1)i\}$.
\end{enumerate}
\end{proposition}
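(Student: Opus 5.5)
The plan is to compare generating functions. For (a), the generating function of partitions of $n$ with all parts at most $k$ is the classical product
\[
\prod_{i=1}^k\frac{1}{1-q^i}.
\]
For (b), I would write down the generating function directly from the definition, in the spirit of \eqref{2.1}. Fix a color $i$ with $1\le i\le k$ and a power $b^j$. The part $b^j$ in color $i$ may occur with multiplicity $0$ (absent) or with multiplicity $mi$ for some $1\le m\le b-1$. Its contribution is therefore the factor
\[
1+q^{i b^j}+q^{2i b^j}+\cdots+q^{(b-1)i b^j}.
\]
Choices for different colors and different powers are independent, so the generating function for (b) is
\[
\prod_{i=1}^k\prod_{j=0}^\infty\left(1+q^{ib^j}+q^{2ib^j}+\cdots+q^{(b-1)ib^j}\right).
\]

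Next, for each fixed $i$ I would apply Lemma~\ref{lem:2.6}, identity \eqref{2.4}, with $q$ replaced by $q^i$. This is legitimate because $|q^i|<1$ whenever $|q|<1$. The inner product over $j$ then equals $1/(1-q^i)$, so the whole product becomes $\prod_{i=1}^k (1-q^i)^{-1}$. This agrees with the generating function for (a), and comparing coefficients of $q^n$ finishes the proof. The special case $b=2$ recovers Proposition~\ref{prop:2.4}.

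The only point needing care is the setup. One must be clear that ``multiplicity in the set $\{i,\ldots,(b-1)i\}$'' refers to parts that actually appear, so multiplicity $0$ is allowed. One must also justify the product formula, which follows the same reasoning as \eqref{2.1}. As an optional step, one could give a bijective reading of the identity. A part $b^j$ of color $i$ with multiplicity $mi$ contributes $m b^j$ copies of $i$. Summing over $j$ gives a number of copies of $i$ whose base-$b$ digits are the values $m$, and uniqueness of base-$b$ expansions makes this a bijection with partitions into parts at most $k$. The generating-function argument already suffices, however.
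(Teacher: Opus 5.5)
Your proposal is correct and follows essentially the same route as the paper: substitute $q\mapsto q^i$ in \eqref{2.4}, insert the result into the generating function $\prod_{i=1}^k(1-q^i)^{-1}$ for (a), and read the resulting double product as the generating function for (b). Your optional bijective remark is just the coefficientwise form of this identity via uniqueness of base-$b$ expansions, which the paper itself notes as an alternative proof of \eqref{2.4}.
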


\begin{proof}
Let $p_k(n)$ denote the number of partitions of $n$ with parts not exceeding
$k$. It is well known, and can be easily seen, that for fixed $k\geq 1$ the
generating function of the sequence $\left(p_k(n)\right)_{n\geq 0}$ is
\begin{equation}\label{2.5}
\sum_{n=0}^\infty p_k(n)q^n = \prod_{i=1}^k\frac{1}{1-q^i}.
\end{equation}
Now we replace $q$ by $q^i$ in \eqref{2.4} and substitute it into \eqref{2.5},
obtaining
\begin{equation}\label{2.6}
\sum_{n=0}^\infty p_k(n)q^n = \prod_{j=0}^\infty\prod_{i=1}^k
\left(1+q^{ib^j}+q^{2ib^j}+\cdots+q^{(b-1)ib^j}\right).
\end{equation}
Now the right-hand side is seen to be the generating function of $k$-color 
$b$-ary partitions, where the parts of color $i$ have multiplicities
$i, 2i,\ldots$, or $(b-1)i$. This was to be shown.
\end{proof}

We conclude this section with an example.

\begin{example}\label{ex:2.8}
{\rm As in Example~\ref{ex:2.5}, we let $k=3$ and $n=6$, but here we take $b=3$.
The unrestricted ternary partitions of 6 are $3+3$, $3+1+1+1$, and $1+\cdots+1$.
The corresponding 3-color ternary partitions of 6, restricted according to (b)
in Proposition~\ref{prop:2.7}, are then
\begin{gather*}
(3_1+3_1),\; (3_2+3_2),\; 3_1+1_1+(1_2+1_2),\; 3_1+(1_3+1_3+1_3),\\
1_1+(1_2+1_2)+(1_3+1_3+1_3),\; (1_1+1_1)+(1_2+1_2+1_2+1_2),\; (1_3+\cdots+1_3).
\end{gather*}
Once again we see that there are 7 such partitions, this time in agreement 
with Proposition~\ref{prop:2.7}}.
\end{example}

\section{Polynomial analogues and recurrence relations}\label{sec:3}


In \cite{DE10} we defined polynomial analogues of the sequence
$C_b^\Lambda(n)$, with the aim of characterizing all the relevant partitions.
The purpose of the present paper is different, and therefore we only state
the following special case of Definition~2.4 in~\cite{DE10}. This, along with
Proposition~\ref{prop:3.2}, will be essential for the following sections.

\begin{definition}\label{def:3.1}
Let $b\geq 2$ and $\rho\geq 1$ be integers, and let
$\Lambda=(\lambda_1,\ldots,\lambda_\rho)$ be a finite integer sequence with
$1\leq\lambda_1\leq\cdots\leq\lambda_\rho$. Furthermore, let
$\lambda:=\lambda_1+\cdots+\lambda_\rho$, and let
$Z=(z_{1,1},\ldots,z_{1,\lambda_1};\ldots;z_{\rho,1},\ldots,z_{\rho,\lambda_\rho})$
be a $\lambda$-tuple of variables. Then we define the sequence of
$\lambda$-variable polynomials $C_b^\Lambda(n;Z)$ by the generating function
\begin{align}
\sum_{n=0}^\infty C_b^\Lambda(n;Z)q^n
=\prod_{j=0}^\infty&\left(1+z_{1,1}q^{b^j}+\dots
+z_{1,\lambda_1}q^{\lambda_1\cdot b^j}\right)\cdots\label{3.1}\\
&\qquad\left(1+z_{\rho,1}q^{b^j}+\dots
+z_{\rho,\lambda_\rho}q^{\lambda_\rho\cdot b^j}\right).\nonumber
\end{align}
\end{definition}

The polynomials $C_b^\Lambda(n;Z)$ satisfy recurrence relations, which we state
here as a special case of Theorem~4.3 in \cite{DE10}.

\begin{proposition}\label{prop:3.2}
Let $b\geq 2$ and $\rho\geq 1$ be integers, and let the finite sequences
$\Lambda$ and $Z$ be as in Definition~\ref{def:3.1}. 
Then $C_b^\Lambda(0;Z)=1$, and for integers $n\geq 0$ we have
\begin{equation}\label{3.2}
C_b^\Lambda(bn+j;Z) 
= \sum_{k=0}^{\lfloor\lambda/b\rfloor}Y_{bk+j}(Z)C_b^\Lambda(n-k;Z)
\qquad (j=0, 1,\ldots b-1),
\end{equation}
with the convention that $C_b^\Lambda(m;Z)=0$ if $m<0$. The 
coefficients $Y_\nu(Z)$, $0\leq\nu\leq\lambda$, are given by 
\begin{equation}\label{3.3}
Y_\nu(Z) = \sum z_{1,i_1}z_{2,i_2}\cdots z_{\rho,i_\rho},\qquad
z_{1,0}=z_{2,0}=\cdots=z_{\rho,0}=1,
\end{equation}
with the sum taken over all $i_1,i_2,\ldots,i_\rho$ with $i_1+\cdots+i_\rho=\nu$
and $0\leq i_\ell\leq\lambda_\ell$ for $\ell=1,\ldots,\rho$. 
In particular, from \eqref{3.3} we get
\[
Y_0(Z)=1,\quad Y_1(Z)=z_{1,1}+z_{2,1}+\cdots+z_{\rho,1},\quad
Y_\lambda(Z) = z_{1,\lambda_1}z_{2,\lambda_2}\cdots z_{\rho,\lambda_\rho},
\]
and $Y_\mu(Z)=0$ when $\mu>\lambda$.
\end{proposition}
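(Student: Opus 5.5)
The plan is to work directly with the generating function \eqref{3.1} and derive a functional equation for it. Write
\[
F(q):=\sum_{n=0}^\infty C_b^\Lambda(n;Z)q^n,\qquad
P(q):=\prod_{\ell=1}^{\rho}\left(1+z_{\ell,1}q+\cdots+z_{\ell,\lambda_\ell}q^{\lambda_\ell}\right).
\]
The $j=0$ factor of the infinite product in \eqref{3.1} is $P(q)$. The remaining factors, $j\geq 1$, form exactly the product defining $F(q^b)$, because replacing $q$ by $q^b$ turns $q^{b^j}$ into $q^{b^{j+1}}$. Hence
\[
F(q)=P(q)\,F(q^b).
\]
Setting $q=0$ gives $F(0)=1$, that is, $C_b^\Lambda(0;Z)=1$. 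This is valid as an identity of formal power series, so convergence issues do not arise.

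Next I would expand $P(q)$. Multiplying out the $\rho$ factors, the coefficient of $q^\nu$ is the sum of $z_{1,i_1}\cdots z_{\rho,i_\rho}$ over all $i_1+\cdots+i_\rho=\nu$ with $0\leq i_\ell\leq\lambda_\ell$, where the constant terms are read as $z_{\ell,0}=1$. This is precisely $Y_\nu(Z)$ as defined in \eqref{3.3}, so
\[
P(q)=\sum_{\nu=0}^{\lambda}Y_\nu(Z)q^\nu .
\]
The listed special values follow at once: $Y_0=1$; $Y_1$ comes from a single index equal to 1; $Y_\lambda$ requires every $i_\ell=\lambda_\ell$; and $Y_\mu=0$ for $\mu>\lambda$, since no admissible tuple has such a large sum.

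Then I compare coefficients. We have
\[
F(q)=\sum_{\nu}\sum_{m\geq0}Y_\nu(Z)C_b^\Lambda(m;Z)q^{\nu+bm}.
\]
To extract the coefficient of $q^{bn+j}$ with $0\leq j\leq b-1$, we need $\nu+bm=bn+j$. Since $0\leq j<b$, this forces $\nu\equiv j\pmod b$, so we may write $\nu=bk+j$ with $k\geq0$ and $m=n-k$. The condition $\nu\leq\lambda$ gives $bk+j\leq\lambda$, which in particular implies $k\leq\lfloor\lambda/b\rfloor$. Any terms with $bk+j>\lambda$ but $k\leq\lfloor\lambda/b\rfloor$ vanish because $Y_{bk+j}=0$ there. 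Likewise, terms with $k>n$ vanish under the convention $C_b^\Lambda(m;Z)=0$ for $m<0$. Together these give exactly \eqref{3.2}.

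No step is a genuine obstacle. The only points needing care are the bookkeeping of the summation range, which the two vanishing conventions handle, and verifying that the shift $q\mapsto q^b$ matches the $j\geq1$ factors exactly.
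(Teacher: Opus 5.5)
Your proposal is correct and is essentially the argument the paper relies on: the paper cites this statement from Theorem~4.3 of \cite{DE10} and reproduces that proof for the unrestricted analogue, Proposition~\ref{prop:5.4}. That proof splits off the $j=0$ factor to get $F(q)=P(q)F(q^b)$, identifies $P(q)$ with $\sum_\nu Y_\nu(Z)q^\nu$ as in Proposition~\ref{prop:3.4}, and compares coefficients of $q^{bn+j}$. One small fix: setting $q=0$ in $F(q)=P(q)F(q^b)$ only gives the tautology $F(0)=F(0)$, so read off $C_b^\Lambda(0;Z)=1$ directly from \eqref{3.1}, where every factor has constant term $1$.
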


\begin{example}\label{ex:3.3}
{\rm As in Examples~\ref{ex:2.2}, we let $b=2$ and $\Lambda=(2,3)$. To avoid
double subscripts, we let $Z=(y_1,y_2;z_1,z_2,z_3)$, and for the sake of 
simplicity we delete the subscripts and superscripts of $C$. Since $\lambda=5$
and $\lfloor\lambda/b\rfloor=2$, Proposition~\ref{prop:3.2} gives}
\begin{align}
C(2n;Z) &= Y_0(Z)C(n;Z) + Y_2(Z)C(n-1;Z) +Y_4(Z)C(n-2;Z),\label{3.4}\\
C(2n+1;Z) &=  Y_1(Z)C(n;Z) + Y_3(Z)C(n-1;Z) + Y_5(Z)C(n-2;Z),\label{3.5}
\end{align}
{\rm where} 
\begin{gather*}
Y_0(Z)=1,\quad Y_1(Z)=y_1+z_1,\quad Y_2(Z)=z_2+y_1z_1+y_2,\\
Y_3(Z)=z_3+y_1z_2+y_2z_1,\quad Y_4(Z)=y_1z_3+y_2z_2, \quad Y_5(Z)=y_2z_3.
\end{gather*}
{\rm With $Z=(1,1;1,1,1)$, we have the situation of Example~\ref{ex:2.2}, and
suppressing again the subscripts and superscripts of $C$, we get from 
\eqref{3.4} and \eqref{3.5},}
\begin{align}
C(2n) &= C(n) + 3C(n-1) +2C(n-2),\label{3.6}\\
C(2n+1) &=  2C(n) + 3C(n-1) + C(n-2).\label{3.7}
\end{align}
{\rm We can see that the coefficients in the second line of \eqref{2.2} 
satisfy these last two recurrences.}
\end{example}

Determining the coefficient polynomials $Y_\nu(Z)$, defined by \eqref{3.3}, is
often difficult and awkward. In some cases, the following generating function
can be helpful.

\begin{proposition}\label{prop:3.4}
The finite sequence of polynomials $Y_\nu(Z)$, defined by \eqref{3.3}, has the
generating function
\begin{equation}\label{3.8}
\sum_{\nu=0}^\lambda Y_\nu(Z)q^\nu = \prod_{i=1}^\rho
\left(1+z_{i,1}q+z_{i,2}q^2+\dots +z_{i,\lambda_i}q^{\lambda_i}\right).
\end{equation}
\end{proposition}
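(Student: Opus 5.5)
The plan is to expand the right-hand side of \eqref{3.8} directly and compare coefficients with the definition \eqref{3.3}. The key device is the convention $z_{i,0}=1$ from \eqref{3.3}. With it, each factor can be written uniformly as
\[
1+z_{i,1}q+\cdots+z_{i,\lambda_i}q^{\lambda_i}=\sum_{m=0}^{\lambda_i} z_{i,m}q^{m},\qquad 1\le i\le\rho .
\]

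Multiplying the $\rho$ finite sums and distributing, I would write the product as a sum over all tuples $(i_1,\ldots,i_\rho)$ with $0\le i_\ell\le\lambda_\ell$:
\[
\prod_{\ell=1}^\rho\sum_{i_\ell=0}^{\lambda_\ell} z_{\ell,i_\ell}q^{i_\ell}
=\sum_{i_1=0}^{\lambda_1}\cdots\sum_{i_\rho=0}^{\lambda_\rho}
z_{1,i_1}\cdots z_{\rho,i_\rho}\,q^{i_1+\cdots+i_\rho}.
\]
Next I group the terms according to $\nu=i_1+\cdots+i_\rho$. The coefficient of $q^\nu$ is then exactly the sum in \eqref{3.3}, namely $Y_\nu(Z)$. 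Since $\nu$ ranges over $0,\ldots,\lambda_1+\cdots+\lambda_\rho=\lambda$, the product equals $\sum_{\nu=0}^{\lambda}Y_\nu(Z)q^\nu$, which is \eqref{3.8}.

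This also recovers the remark that $Y_\mu(Z)=0$ for $\mu>\lambda$, because no admissible tuple has that sum. There is no real obstacle here. The only point needing care is that \eqref{3.3} uses the convention $z_{\ell,0}=1$, which is precisely what makes the constant term $1$ of each factor fit the uniform indexing.
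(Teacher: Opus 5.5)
Your proof is correct and takes the same approach as the paper: expand the product on the right of \eqref{3.8} and collect the coefficient of $q^\nu$, which is exactly the sum in \eqref{3.3}. Writing each factor uniformly via the convention $z_{\ell,0}=1$ just makes explicit a step the paper leaves implicit.
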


\begin{proof}
We expand the right-hand side of \eqref{3.8} and collect the coefficients of
$q^\nu$, $0\leq\nu\leq\lambda$. Then we see that for each $\nu$ we get the sum
in \eqref{3.3}.
\end{proof}

We have the following formal relationship between \eqref{3.8} and the 
generating function \eqref{3.1}. If we write \eqref{3.1} in the form
\begin{equation}\label{3.9}
\sum_{n=0}^\infty C_b^\Lambda(n;Z)q^n =\prod_{j=0}^\infty\prod_{i=1}^\rho
\left(1+z_{i,1}q^{b^j}+z_{i,2}q^{2\cdot b^j}+\dots
+z_{i,\lambda_i}q^{\lambda_i\cdot b^j}\right),
\end{equation}
then the right-hand side of \eqref{3.8} is the inner product of \eqref{3.9}
with $b=1$.

To obtain a first consequence of Proposition~\ref{prop:3.4}, we set 
$Z=(1,\ldots,1)$ and recall that $\lambda=\lambda_1+\dots+\lambda_\rho$. For
simplicity, we set $Y_n:=Y_n(1,\ldots,1)$.

\begin{corollary}\label{cor:3.5}
We have the generating function
\begin{equation}\label{3.10}
\sum_{n=0}^\lambda Y_nq^n = \prod_{i=1}^\rho
\left(1+q+q^2+\dots+q^{\lambda_i}\right),
\end{equation}
and consequently
\begin{equation}\label{3.11}
\sum_{n=0}^\lambda Y_n = \prod_{i=1}^\rho\left(1+\lambda_i\right),
\end{equation}
Furthermore, we have
\begin{equation}\label{3.12}
Y_n = Y_{\lambda-n}\quad\hbox{for all}\;\;0\leq n\leq\lambda.
\end{equation}
\end{corollary}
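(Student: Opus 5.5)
The plan is to derive all three statements from Proposition~\ref{prop:3.4}.

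For \eqref{3.10}, we specialize \eqref{3.8} at $Z=(1,\ldots,1)$, that is, we set $z_{i,\ell}=1$ for all $1\leq i\leq\rho$ and $1\leq\ell\leq\lambda_i$. The left-hand side becomes $\sum_{n=0}^\lambda Y_nq^n$, since $Y_n=Y_n(1,\ldots,1)$ by definition. Each factor on the right-hand side becomes $1+q+\cdots+q^{\lambda_i}$. This gives \eqref{3.10} at once.

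For \eqref{3.11}, we put $q=1$ in \eqref{3.10}, which is legitimate because both sides are polynomials. Each factor $1+q+\cdots+q^{\lambda_i}$ then evaluates to $1+\lambda_i$.

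For the symmetry \eqref{3.12}, we use reciprocal polynomials. Let $P(q)$ denote the right-hand side of \eqref{3.10}, a polynomial of degree $\lambda=\sum\lambda_i$. Each factor $f_i(q)=1+q+\cdots+q^{\lambda_i}$ is palindromic, meaning $q^{\lambda_i}f_i(1/q)=f_i(q)$. Multiplying these relations over $i=1,\ldots,\rho$ gives $q^{\lambda}P(1/q)=P(q)$. Comparing coefficients of $q^n$ on the two sides then yields $Y_{\lambda-n}=Y_n$ for $0\leq n\leq\lambda$.

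As a check, and as an alternative argument, \eqref{3.12} also follows combinatorially from \eqref{3.3}. At $Z=(1,\ldots,1)$, $Y_\nu$ counts the tuples $(i_1,\ldots,i_\rho)$ with $0\leq i_\ell\leq\lambda_\ell$ and $\sum i_\ell=\nu$. The map $i_\ell\mapsto\lambda_\ell-i_\ell$ is a bijection from the tuples with sum $\nu$ onto those with sum $\lambda-\nu$. None of these steps presents a real obstacle. The only point requiring care is that the degree of $P$ is exactly $\lambda$, so that the reciprocal relation is taken with the right power $q^\lambda$.
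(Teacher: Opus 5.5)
Your proposal is correct and follows the paper's proof essentially step for step: specialize \eqref{3.8} at $Z=(1,\ldots,1)$, set $q=1$ (which is legitimate since the sum and product are finite), and obtain \eqref{3.12} from the self-reciprocity relation $q^\lambda P(1/q)=P(q)$. The bijection $i_\ell\mapsto\lambda_\ell-i_\ell$ that you add is a valid combinatorial alternative to the reciprocity argument, and the paper does not give it.
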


\begin{proof}
The identity \eqref{3.10} is a special case of \eqref{3.8}, and \eqref{3.11}
follows from \eqref{3.10} by setting $q=1$. This is allowed since sum and 
product are finite here. Finally, \eqref{3.12} reflects the fact that the
polynomial on the left is self-reciprocal which, in turn, can be seen by
replacing $q$ by $q^{-1}$ in \eqref{3.10} and multiplying both sides 
by~$q^\lambda$.
\end{proof}

\section{Connections through recurrences}\label{sec:4}

\subsection{All parameters $z_{i,j}=1$}\label{sec:4.1}

For a combinatorial interpretation of \eqref{3.10}, we note that upon 
expanding the product on the right of \eqref{3.10}, we can assign the 
color $i$ to the factor
belonging to $i$, for $1\leq i\leq\rho$. We then get the following 
characterization of $Y_n$ in terms of restricted colored ordinary partitions.

\begin{lemma}\label{lem:3.6}
For $1\leq n\leq\lambda$, let $Y_n$ be as in \eqref{3.10}. Then $Y_n$ counts 
the number of $\rho$-color ordinary partitions of $n$ which have at most
one part of each color and a part of color $i$ is bounded by $\lambda_i$.
\end{lemma}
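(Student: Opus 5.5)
We argue directly from the generating function \eqref{3.10} of Corollary~\ref{cor:3.5}, namely
\[
\sum_{n=0}^\lambda Y_nq^n=\prod_{i=1}^\rho\left(1+q+q^2+\dots+q^{\lambda_i}\right).
\]
The idea is to read the right-hand side as a generating function for colored ordinary partitions, exactly as \eqref{2.1} and \eqref{2.6} were read as generating functions for colored $b$-ary partitions.

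First, we expand the product and note that every term arises by choosing, for each $i=1,\ldots,\rho$, one summand $q^{a_i}$ with $0\leq a_i\leq\lambda_i$ from the $i$th factor. The resulting monomial is $q^{a_1+\cdots+a_\rho}$. Hence $Y_n$ equals the number of $\rho$-tuples $(a_1,\ldots,a_\rho)$ of integers with $0\leq a_i\leq\lambda_i$ and $a_1+\cdots+a_\rho=n$. This is simply \eqref{3.3} with all $z_{i,j}=1$, so it could also be read off directly from Proposition~\ref{prop:3.2}.

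Next, we set up a bijection between such tuples and the colored partitions in the statement. To a tuple $(a_1,\ldots,a_\rho)$ we associate the $\rho$-color partition of $n$ that has a single part $(a_i)_i$ of color $i$ whenever $a_i\geq 1$, and no part of color $i$ when $a_i=0$. This partition has at most one part of each color, and its part of color $i$ is bounded by $\lambda_i$. Conversely, given such a partition, we recover the tuple by letting $a_i$ be the size of the part of color $i$, or $a_i=0$ if there is none. The two maps are mutually inverse, and since the parts sum to $n$ exactly when the $a_i$ do, the two sets have the same cardinality.

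There is no real obstacle here. The only point needing care is the convention that a colored partition is determined by the multiset of its colored parts, so that the order of the parts is irrelevant. Because distinct colors are distinguishable and each color occurs at most once, this multiset is exactly recorded by the tuple $(a_i)$. The restriction $n\geq 1$ in the statement merely excludes the empty partition, which corresponds to $Y_0=1$.
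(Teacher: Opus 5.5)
Your proof is correct and takes the same approach as the paper, which likewise expands the product in \eqref{3.10} and assigns color $i$ to the $i$th factor, so that the chosen term $q^{a_i}$ becomes a single part $a_i\le\lambda_i$ of color $i$ (or no part if $a_i=0$). Your explicit bijection with the tuples $(a_1,\ldots,a_\rho)$ spells out what the paper leaves as a one-line remark before the lemma.
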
 

\begin{example}\label{ex:3.7}
{\rm As in Examples~\ref{ex:2.2} and~\ref{ex:3.3}, we let $\Lambda=(2,3)$. Then
the partitions described in Lemma~\ref{lem:3.6} are as follows:}
\begin{center}
\begin{tabular}{l|l}
$n=1$:\quad $1_1,\;1_2$ & $n=4$:\quad $3_2+1_1,\;2_1+2_2$ \\
$n=2$:\quad $2_1,\;2_2,\;1_1+1_2$ & $n=5$:\quad $3_2+2_1$ \\
$n=3$:\quad $3_2,\;2_1+1_2,\;2_2+1_1$ &
\end{tabular}
\end{center}
{\rm The numbers $Y_n$ of these are in agreement with the corresponding
polynomial~\eqref{3.10}:
\[
\left(1+q+q^2\right)\left(1+q+q^2+q^3\right)
= 1+2q+3q^2+3q^3+2q^4+q^5,
\]
with the coefficients in \eqref{3.6} and \eqref{3.7}.}
\end{example}

For a more general result, we only need to put together the identities 
\eqref{2.1}, \eqref{3.2}, and Lemma~\ref{lem:3.6}.

\begin{proposition}\label{prop:3.8}
Let $b\geq 2$, $\rho\geq 1$, and $\Lambda=(\lambda_1,\ldots,\lambda_\rho)$.
Then the sequence $C_b^\Lambda(n)$ of $\Lambda$-restricted $\rho$-color 
$b$-ary partitions satisfies the recurrence relation
\begin{equation}\label{3.13}
C_b^\Lambda(bn+j)=\sum_{k=0}^{\lfloor\lambda/b\rfloor}Y_{bk+j}C_b^\Lambda(n-k)
\qquad (j=0, 1,\ldots b-1),
\end{equation}
where $Y_0=1$ and the coefficients $Y_n$, $n\geq 1$, count the number of 
$\rho$-color ordinary partitions of $n$ which have at most
one part of each color and a part of color $i$ is bounded by $\lambda_i$.
\end{proposition}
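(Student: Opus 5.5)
The plan is to specialize Proposition~\ref{prop:3.2} to $Z=(1,\ldots,1)$ and then read off the coefficients via Lemma~\ref{lem:3.6}.

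First we check the specialization of the generating functions. Setting every $z_{i,\ell}=1$ in \eqref{3.1} gives exactly the product on the right of \eqref{2.1}. Comparing coefficients of $q^n$ then shows
\[
C_b^\Lambda(n;(1,\ldots,1))=C_b^\Lambda(n)\qquad\text{for all } n\geq 0.
\]
Next we apply the recurrence. Since \eqref{3.2} is a polynomial identity in the variables $z_{i,\ell}$, we may evaluate it at $Z=(1,\ldots,1)$. This gives \eqref{3.13} with $Y_\nu=Y_\nu(1,\ldots,1)$, where the conventions $C_b^\Lambda(m)=0$ for $m<0$ and $Y_\mu=0$ for $\mu>\lambda$ carry over unchanged.

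It remains to identify the coefficients. By Proposition~\ref{prop:3.4}, specialized as in Corollary~\ref{cor:3.5}, the numbers $Y_n$ are the coefficients in \eqref{3.10}, and $Y_0=1$. To interpret them, expand the product $\prod_{i=1}^\rho(1+q+\cdots+q^{\lambda_i})$. Each term arises by choosing, for each $i$, an exponent $m_i\in\{0,1,\ldots,\lambda_i\}$, and this term contributes to $q^{m_1+\cdots+m_\rho}$. We associate to this choice the $\rho$-color partition having one part $m_i$ of color $i$ whenever $m_i\geq1$, and no part of color $i$ when $m_i=0$. This correspondence is a bijection onto the $\rho$-color ordinary partitions of $n$ with at most one part of each color and with each part of color $i$ bounded by $\lambda_i$. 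Hence for $1\leq n\leq\lambda$ the number $Y_n$ counts these partitions, which is exactly Lemma~\ref{lem:3.6}. For $n>\lambda$ there are no such partitions, and correspondingly $Y_n=0$.

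No step is a real obstacle. The only care needed is bookkeeping: when $bk+j>\lambda$ the corresponding terms in \eqref{3.13} vanish, which is consistent with the count of such partitions being zero. This makes the upper summation limit $\lfloor\lambda/b\rfloor$ harmless.
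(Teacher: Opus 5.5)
Your proposal is correct and follows the paper's argument: the paper obtains Proposition~\ref{prop:3.8} by combining \eqref{2.1}, the recurrence \eqref{3.2} evaluated at $Z=(1,\ldots,1)$, and Lemma~\ref{lem:3.6}. Your bijection from exponent choices in \eqref{3.10} to colored partitions is exactly the argument behind Lemma~\ref{lem:3.6}, and you handle the vanishing terms for $bk+j>\lambda$ correctly.
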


Next, we consider the special case where $\lambda_1=\dots=\lambda_\rho$, that 
is, the colored parts are uniformly bounded. In this case we can use
compositions as an alternative to the colored partitions of Lemma~\ref{lem:3.6}.
We recall that an integer composition is a partition in which the order of the
parts matters. We then have the following variant of Lemma~\ref{lem:3.6}.

\begin{lemma}\label{lem:3.9} 
Let $\lambda_1=\dots=\lambda_\rho$ in \eqref{3.10}. Then the following hold:
\begin{enumerate}
\item[(a)] $Y_n$ counts the number of compositions of $n+\rho$ into exactly 
$\rho$ parts between $1$ and $\lambda_1+1$.
\item[(b)] When $\lambda_1=1$, then $Y_n=\binom{\rho}{n}$, $n=0,1,\ldots,\rho$.
\end{enumerate}
\end{lemma}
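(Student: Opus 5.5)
With $\lambda_1=\dots=\lambda_\rho$, the generating function \eqref{3.10} of Corollary~\ref{cor:3.5} becomes a single power:
\[
\sum_{n=0}^{\lambda}Y_nq^n=\left(1+q+\cdots+q^{\lambda_1}\right)^\rho ,
\]
so $Y_n$ is the number of ordered $\rho$-tuples $(i_1,\ldots,i_\rho)$ with $0\leq i_\ell\leq\lambda_1$ and $i_1+\cdots+i_\rho=n$. This is simply \eqref{3.3} with all $z_{i,j}=1$. Equivalently, in Lemma~\ref{lem:3.6} each colored partition is recorded by the vector of its part sizes indexed by color, with $0$ meaning that color is absent. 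Since the bounds are now uniform, the colors play the role of positions, and a colored partition is just an ordered tuple.

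For part (a), I will use the shift $c_\ell=i_\ell+1$. It is a bijection from the tuples above onto the $\rho$-tuples $(c_1,\ldots,c_\rho)$ with $1\leq c_\ell\leq\lambda_1+1$ and $c_1+\cdots+c_\rho=n+\rho$. These are exactly the compositions of $n+\rho$ into exactly $\rho$ parts, each between $1$ and $\lambda_1+1$. At the generating-function level this is the identity
\[
q^\rho\left(1+q+\cdots+q^{\lambda_1}\right)^\rho=\left(q+q^2+\cdots+q^{\lambda_1+1}\right)^\rho,
\]
where the right-hand side is the standard generating function for compositions into $\rho$ parts in $\{1,\ldots,\lambda_1+1\}$. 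Comparing the coefficients of $q^{n+\rho}$ gives (a).

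For part (b), setting $\lambda_1=1$ makes the product $(1+q)^\rho$. The binomial theorem then gives $Y_n=\binom{\rho}{n}$ for $0\leq n\leq\rho$, and here $\lambda=\rho$. As a combinatorial check, a tuple with $i_\ell\in\{0,1\}$ summing to $n$ is the same thing as a choice of $n$ colors out of $\rho$.

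Neither step poses a real obstacle. The only care needed is to be explicit that ordering by color index turns the colored partitions of Lemma~\ref{lem:3.6} into ordered tuples, so that they correspond to compositions rather than partitions, and to note the range $0\leq n\leq\lambda=\rho\lambda_1$.
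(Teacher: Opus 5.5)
Your proposal is correct and follows essentially the same route as the paper: expand $\left(1+q+\cdots+q^{\lambda_1}\right)^\rho$ to see that $Y_n$ counts ordered $\rho$-tuples with entries in $\{0,\ldots,\lambda_1\}$ summing to $n$, shift each entry by $1$ to obtain compositions of $n+\rho$ into $\rho$ parts between $1$ and $\lambda_1+1$, and get (b) from the binomial theorem. The identity $q^\rho\left(1+\cdots+q^{\lambda_1}\right)^\rho=\left(q+\cdots+q^{\lambda_1+1}\right)^\rho$ that you add is simply a generating-function restatement of that same shift.
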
 

\begin{proof}
(a) This is a well-known fact; see, e.g., \cite{Eg}, \cite{HO}, or the 
comments in \cite[A027907 or A008287]{OEIS}. For the sake of completeness, we
give the following easy argument. By expanding the right-hand side of 
\eqref{3.10} with $\lambda_1=\dots=\lambda_\rho$, we see that $Y_n$ is the
number of all ordered partitions of $n$ consisting of exactly $\rho$ summands
between 0 and $\lambda_1$. To avoid the awkward situation of counting 
different placements of 0 as different (ordered) partitions, we add 1 to all
parts of all partitions. This gives the same number of ordered partitions
(i.e., compositions) of $n+\rho$ into exactly $\rho$ parts between 1 and
$\lambda_1+1$, as claimed. 

(b) As binomial expansion of $(1+q)^{\rho}$, this is obvious.
\end{proof}

Using Lemma~\ref{lem:3.9}, one could now adapt Proposition~\ref{prop:3.8} 
accordingly.

\begin{example}\label{ex:3:10}
{\rm With $\rho=4$ and $\lambda_1=\dots=\lambda_4=2$ we have with \eqref{3.10},}
\begin{equation}\label{3.14}
\sum_{n=0}^8 Y_nq^n = \left(1+q+q^2\right)^4
=1+4q+10q^2+16q^3+19q^4+16q^5+10q^6+4q^7+q^8.
\end{equation}
{\rm The finite sequence of coefficients on the right of \eqref{3.14} 
can be found in \cite[A027907]{OEIS} as a row in what is referred to 
as a triangle of trinomial coefficients.
 
If in \eqref{3.14} we take $n=3$, for instance, then according to 
Lemma~\ref{lem:3.9} we 
consider the compositions of $n+\rho=7$ into $\rho=4$ parts that are bounded
by $\lambda_1+1=3$. Given the corresponding bounded {\it partitions} of 7,
namely $3+2+1+1$ and $2+2+2+1$, we see that the first partition leads to 
$2\cdot\binom{4}{2}=12$ compositions and the second one to 4 compositions, with
a total of 16. This is consistent with \eqref{3.14} and $Y_3=16$.}
\end{example}

\subsection{Different parameters $z_{i,j}$}\label{sec:4.2}

So far in this section, we always chose the parameters $z_{i,j}=1$ for all
$1\leq j\leq\rho$ and $1\leq j\leq\lambda_1$. For our next result, we will set
$z_{i,j}=0$ for many pairs $(i,j)$, while others will remain at 1. To be 
exact, given the integers $\rho\geq 1$ and $\ell\geq 1$, we set
$\Lambda=(\ell, 2\ell,\ldots, \rho\ell)$ in \eqref{3.1} or \eqref{3.9}. We then
choose the parameters $z_{i,j}$ in such a way that, for a fixed $b\geq 1$,
\begin{equation}\label{3.15}
\sum_{n=0}^\infty C_b^\Lambda(n)q^n =\prod_{j=0}^\infty\prod_{i=1}^\rho
\left(1+q^{ib^j}+q^{2ib^j}+\dots+q^{\ell ib^j}\right).
\end{equation}
By expanding the right-hand side of \eqref{3.15}, we see that $C_b^\Lambda(n)$
counts the number of $\rho$-color $b$-ary partitions of $n$, where the 
multiplicity of each part with color $i$ is one of $i, 2i,\ldots, \ell i$.

\begin{example}\label{ex:3.11}
{\rm Let $\rho=4$, $\ell=2$, and $b=2$. Then the first few terms of 
\eqref{3.15} are}
\begin{equation}\label{3.16}
\sum_{n=0}^\infty C_2^\Lambda(n)q^n 
=1+q+3q^2+3q^3+9q^4+9q^5+18q^6+18q^7+39q^8+\dots
\end{equation}
{\rm The 4-color binary partitions of, say, $n=5$ with the $i$th color occurring
$i$ or $2i$ times are as follows, where we bracketed repeated parts for better
visibility:}
\begin{align*}
& 4_1+1_1,\quad (2_1+2_1)+1_1,\quad (2_2+2_2)+1_1,\quad 2_1+1_1+(1_2+1_2),\\
& 2_1+(1_3+1_3+1_3),\quad (1_1+1_1)+(1_3+1_3+1_3),\quad (1_2+1_2)+(1_3+1_3+1_3),\\
& 1_1+(1_2+1_2+1_2+1_2),\quad 1_1+(1_4+1_4+1_4+1_4).
\end{align*}
{\rm There are 9 such partitions, which is consistent with the term $9q^5$
in \eqref{3.16}.}
\end{example}

Next, as we have seen in Proposition~\ref{prop:3.4}, the
corresponding coefficients $Y_n$ satisfy
\begin{equation}\label{3.17}
\sum_{n=0}^\lambda Y_nq^n = \prod_{i=1}^\rho
\left(1+q^i+q^{2i}+\dots+q^{\ell i}\right),
\end{equation}
where $\lambda=\ell+2\ell+\dots+\rho\ell=\ell\rho(\rho+1)/2$. Expanding the
product on the right as usual, we see that $Y_n$ is the number of ordinary
partitions with each part bounded by $\rho$ and occurring at most $\ell$ times.

\begin{example}\label{ex:3.12}
{\rm We let again $\rho=4$ and $\ell=2$. Then \eqref{3.17} has the expansion}
\begin{equation}\label{3.18}
\sum_{n=0}^{20} Y_nq^n 
= 1+q+2q^2+2q^3+4q^4+4q^5+5q^6+5q^7+7q^8+6q^9+7q^{10}+\dots+q^{20}.
\end{equation}
{\rm The ordinary partitions of, say, $n=8$ with each part bounded by 4 and 
occurring at most twice, are}
\begin{align*}
& 4+4,\; 4+3+1,\; 4+2+2,\; 4+2+1+1,\\
& 3+3+2,\; 3+3+1+1,\; 3+2+2+1.
\end{align*}
{\rm There are 7 such partitions, which is consistent with the term $7q^8$ in
\eqref{3.18}.}
\end{example}

Combining \eqref{3.15}, \eqref{3.17}, and their interpretations with 
\eqref{3.2}, we can state the following result.

\begin{proposition}\label{prop:3.13}
Let $\rho\geq 1$, $\ell\geq 1$, and $b\geq 2$ be integers. Then
\begin{enumerate}
\item[(a)] the sequence $C_b^{\Lambda}(n)$ of $\rho$-color $b$-ary 
partitions of $n$ (where the multiplicity of each part with color $i$ is one 
of $i, 2i, \ldots, \ell i$) and
\item[(b)] the sequence $Y_n$ of ordinary partitions with each part bounded by
$\rho$ and occurring at most $\ell$ times
\end{enumerate}
are connected by the recurrence relation
\begin{equation}\label{3.19}
C_b^\Lambda(bn+j)=\sum_{k=0}^{\lfloor\lambda/b\rfloor}Y_{bk+j}C_b^\Lambda(n-k)
\qquad (j=0, 1,\ldots b-1),
\end{equation}
where $\lambda=\ell\rho(\rho+1)/2$.
\end{proposition}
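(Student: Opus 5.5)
The plan is to obtain Proposition~\ref{prop:3.13} as a specialization of Proposition~\ref{prop:3.2}: choose $\Lambda=(\ell,2\ell,\ldots,\rho\ell)$ and a $0/1$ parameter vector $Z$ so that \eqref{3.9} becomes \eqref{3.15}, and then read off the coefficients $Y_\nu(Z)$ from Proposition~\ref{prop:3.4}. The first step fixes $Z$. In the $i$th color block, whose length is $\lambda_i=i\ell$, we set
\[
z_{i,m}=1 \quad\hbox{if } i\mid m, \qquad z_{i,m}=0 \quad\hbox{otherwise},\qquad 1\leq m\leq i\ell .
\]
With this choice the $i$th factor in \eqref{3.9} is
\[
1+q^{ib^j}+q^{2ib^j}+\dots+q^{\ell i b^j},
\]
so \eqref{3.9} reduces to \eqref{3.15}. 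Hence $C_b^\Lambda(n;Z)$ is exactly the counting function in part~(a). Its combinatorial meaning, that each part of color $i$ has multiplicity in $\{i,2i,\ldots,\ell i\}$, comes from expanding the product, just as in the proof of Proposition~\ref{prop:2.7}.

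The second step applies Proposition~\ref{prop:3.2} with this $Z$. The recurrence \eqref{3.2} holds for arbitrary $Z$ because it is a polynomial identity in the variables, so it still holds after specializing to these $0/1$ values. Here $\lambda=\sum_{i=1}^{\rho} i\ell=\ell\rho(\rho+1)/2$, and \eqref{3.2} takes the form \eqref{3.19} with coefficients $Y_\nu(Z)$.

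The third step identifies these coefficients. By Proposition~\ref{prop:3.4}, $\sum_\nu Y_\nu(Z)q^\nu$ is the same product with $b=1$, and that product is precisely \eqref{3.17}. Expanding it, we choose from the $i$th factor a term $q^{ti}$ with $0\leq t\leq\ell$, which corresponds to taking the part $i$ exactly $t$ times. So $Y_\nu$ counts the ordinary partitions of $\nu$ into parts at most $\rho$, each occurring at most $\ell$ times, which is part~(b). Coefficients with index beyond $\lambda$ vanish, consistent with the upper summation limit in \eqref{3.19}.

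There is no real obstacle. The only delicate point is the bookkeeping in the first step. Proposition~\ref{prop:3.2} needs $\lambda$ defined as the sum of the $\lambda_i$ even though many $z_{i,m}$ are zero; this is harmless, since it only means some $Y_\nu$ could vanish. One should also check that the ordering hypothesis $\lambda_1\leq\dots\leq\lambda_\rho$ holds, which it does because $\lambda_i=i\ell$.
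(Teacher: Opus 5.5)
Your proposal is correct and follows the paper's own route. Like the paper, you specialize $\Lambda=(\ell,2\ell,\ldots,\rho\ell)$ with $0/1$ parameters to obtain \eqref{3.15}, read off \eqref{3.17} from Proposition~\ref{prop:3.4}, and apply the recurrence \eqref{3.2}; the only difference is that you spell out the choice $z_{i,m}=1$ exactly when $i\mid m$, which the paper leaves implicit.
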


\section{Recurrences for unrestricted $b$-ary partitions}\label{sec:5}

In Section~\ref{sec:3} and earlier in \cite{DE10}, we dealt only with 
{\it restricted\/} $b$-ary partitions and their polynomial analogues, and we 
saw that the recurrence relation \eqref{3.2} was crucial for our results. It is
the purpose of this section to show that a version of \eqref{3.2} also holds 
for the relevant sequences in the {\it unrestricted\/} case. We begin with an 
analogue of 
Defnition~\ref{def:3.1}, but to avoid having to deal with polynomials with
an infinite number of variables, we now refer to the $z_{i,j}$ as parameters.

\begin{definition}\label{def:5.1}
Let $b\geq 2$ and $\rho\geq 1$ be integers. For each $i=1, 2, \ldots, \rho$, 
let $(z_{i,1}, z_{i,2}, \ldots)$ be an infinite sequence of parameters 
satisfying $|z_{i,j}|\leq r^j$ for a fixed real $r\geq 1$, and let $Z$ denote 
the collection of all $z_{i,j}$. Then we define the sequence
$C_b^\rho(n;Z)$ by the generating function
\begin{equation}\label{5.1}
\sum_{n=0}^\infty C_b^\rho(n;Z)q^n =\prod_{j=0}^\infty\prod_{i=1}^\rho
\left(1+z_{i,1}q^{b^j}+z_{i,2}q^{2\cdot b^j}+z_{i,3}q^{3\cdot b^j}+\dots\right).
\end{equation}
When $z_{i,j}=1$ for all $1\leq i\leq\rho$ and $j\geq 1$, we set
$C_b^\rho(n):=C_b^\rho(n;Z)$, so that
\begin{equation}\label{5.2}
\sum_{n=0}^\infty C_b^\rho(n)q^n =\prod_{j=0}^\infty
\left(1+q^{b^j}+q^{2\cdot b^j}+q^{3\cdot b^j}+\dots\right)^\rho
=\prod_{j=0}^\infty\frac{1}{\left(1-q^{b^j}\right)^\rho}.
\end{equation}
\end{definition}

In analogy to \eqref{2.1} combined with Definition~\ref{def:2.1}, we can now 
state the following fact, which is easy to verify by expanding the first 
product in \eqref{5.2} and collecting equal powers of $q$.

\begin{lemma}\label{lem:5.2} 
Let $b\geq 2$ and $\rho\geq 1$ be integers. Then for each integer $n\geq 1$, 
the number $C_b^\rho(n)$ counts the $\rho$-color $b$-ary partitions of $n$.
\end{lemma}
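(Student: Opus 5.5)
The plan is to expand the first product in \eqref{5.2} and read off the coefficient of $q^n$, in the same way that \eqref{2.1} is tied to Definition~\ref{def:2.1}. First, note that for $|q|<1$ each factor $1+q^{b^j}+q^{2b^j}+\cdots$ is a geometric series equal to $(1-q^{b^j})^{-1}$. This justifies the second equality in \eqref{5.2}. It also shows that the infinite product converges absolutely, since $\sum_j |q|^{b^j}<\infty$. Consequently we may expand the product formally as a power series in $q$.

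Next, write the $\rho$-th power as a product over colors $i=1,\ldots,\rho$, so the right-hand side of \eqref{5.2} becomes
\[
\prod_{j=0}^\infty\prod_{i=1}^\rho\bigl(1+q^{b^j}+q^{2b^j}+\cdots\bigr).
\]
We then attach color $i$ to the $i$th factor belonging to each $j$. A term of the expanded product is specified by choosing, for each pair $(j,i)$, an exponent $m_{j,i}b^j$ with $m_{j,i}\ge0$, where only finitely many $m_{j,i}$ are nonzero. Such a term contributes $q^{N}$ with $N=\sum_{j,i}m_{j,i}b^j$.

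We interpret $m_{j,i}$ as the number of parts $b^j$ carrying color $i$. This sets up a bijection between the terms contributing $q^n$ and the $\rho$-color $b$-ary partitions of $n$ with no restriction on multiplicities. In other words, these are the partitions of Definition~\ref{def:2.1} with all $\lambda_r$ allowed to be unbounded. Counting the terms gives $C_b^\rho(n)$.

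The only point needing care, which is hardly an obstacle, is that each coefficient of $q^n$ comes from a finite computation. Only factors with $b^j\le n$ can contribute a nontrivial exponent, and in those factors only finitely many $m_{j,i}\le n/b^j$ are possible. So the coefficient of $q^n$ is a finite sum, and the infinite products cause no convergence issue at the level of coefficients.
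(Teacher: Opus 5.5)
Your proposal is correct and is essentially the paper's own argument: the paper verifies the lemma simply by expanding the first product in \eqref{5.2} and collecting equal powers of $q$, which is exactly your bijection between choices of multiplicities $m_{j,i}$ and $\rho$-color $b$-ary partitions. Your remarks on convergence and on each coefficient being a finite sum are extra care that the paper leaves implicit.
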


\begin{example}\label{ex:5.3}
{\rm Let $b=2$ and $\rho=2$. Then \eqref{5.2} becomes
\begin{equation}\label{5.3}
\sum_{n=0}^\infty C_2^2(n)q^n = 1+2q+5q^2+8q^3+16q^4+24q^5+40q^6+\ldots;
\end{equation}
see also \cite[A171238]{OEIS}. On the other hand, the 2-color binary partitions
of, say, $n=4$ are}

$4_1,\;4_2,\;2_1+2_1,\;2_1+2_2,\;2_2+2_2,\;2_1+1_1+1_1,\;2_1+1_1+1_2,\;
2_1+1_2+1_2,$

$2_2+1_1+1_1,\;2_2+1_1+1_2,\; 2_2+1_2+1_2,\;1_1+1_1+1_1+1_1,\;
1_1+1_1+1_1+1_2,$

$1_1+1_1+1_2+1_2,\; 1_1+1_2+1_2+1_2,\; 1_2+1_2+1_2+1_2.$

\noindent
{\rm We count 16 partitions, which is consistent with the term $16q^4$ in 
\eqref{5.3}, and is three more than the restricted analogues in 
Example~\ref{ex:2.2}.}
\end{example}

We are now ready to state and prove the unrestricted analogue of 
Proposition~\ref{prop:3.2}.

\begin{proposition}\label{prop:5.4}
Let $b\geq 2$ and $\rho\geq 1$ be integers, and let $Z$ and $C_b^\rho(n;Z)$ be 
as in Definition~\ref{def:5.1}. Then $C_b^\rho(0;Z)=1$, and for integers 
$n\geq 0$ we have
\begin{equation}\label{5.4}
C_b^\rho(bn+j;Z)
= \sum_{k=0}^n Y_{bk+j}(Z)C_b^\rho(n-k;Z) \qquad (j=0, 1,\ldots b-1),
\end{equation}
where the sequence of coefficients $Y_n(Z)$ is defined by the generating 
function
\begin{equation}\label{5.5}
\sum_{n=0}^\infty Y_n(Z)q^n = \prod_{i=1}^\rho
\left(1+z_{i,1}q+z_{i,2}q^2+z_{i,3}q^3+\dots\right).
\end{equation}
\end{proposition}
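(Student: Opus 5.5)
Write $F(q):=\sum_{n\ge0}C_b^\rho(n;Z)q^n$ and $P(q):=\sum_{n\ge0}Y_n(Z)q^n$, the product in \eqref{5.5}. The plan is to prove \eqref{5.4} by a functional equation for generating functions, as in the restricted case. The only new feature is that $P$ is now a power series rather than a polynomial, which is why the sum in \eqref{5.4} runs to $n$ and not to $\lfloor\lambda/b\rfloor$.

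First, convergence. The bound $|z_{i,j}|\le r^j$ shows that each factor $1+z_{i,1}q+z_{i,2}q^2+\cdots$ converges for $|q|<1/r$, so $P(q)$ is analytic there. The infinite product \eqref{5.1} also converges for such $q$: the $j$th factor is $1+O(|rq|^{b^j})$ with $b^j\to\infty$, and $\sum_j |rq|^{b^j}<\infty$. Alternatively, one can simply regard everything as identities of formal power series in $q$, since for each fixed power of $q$ only finitely many factors contribute. The formal viewpoint is the safest, and I would adopt it, mentioning analytic convergence only as a remark.

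Next, isolate the $j=0$ factor in \eqref{5.1}. The $j=0$ factor is exactly $P(q)$. The remaining factors $j\ge1$, after the shift $j\mapsto j+1$, form the same product with $q$ replaced by $q^b$. This gives the functional equation
\[
F(q)=P(q)\,F(q^b).
\]
This is the key step, and it is immediate from the definition.

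Finally, compare coefficients. Write $F(q^b)=\sum_m C_b^\rho(m;Z)q^{bm}$. The coefficient of $q^{bn+j}$ in $P(q)F(q^b)$, with $0\le j\le b-1$, is the sum over $m\ge0$ with $bm\le bn+j$ of $Y_{bn+j-bm}(Z)\,C_b^\rho(m;Z)$. Setting $k=n-m$, the condition becomes $0\le k\le n$, since $bm\le bn+j<b(n+1)$ forces $m\le n$. Each term is then $Y_{bk+j}(Z)\,C_b^\rho(n-k;Z)$, which is \eqref{5.4}. Also $C_b^\rho(0;Z)=1$, because every factor has constant term $1$.

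No step is a real obstacle. The only care needed is the index bookkeeping that shows the upper limit is exactly $n$, and the justification that the infinite product is a well-defined formal power series.
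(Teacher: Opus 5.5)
Your proposal is correct and follows essentially the same route as the paper: isolating the $j=0$ factor to get $F(q)=P(q)F(q^b)$ (the paper's \eqref{5.6}, restated as \eqref{5.9}) and then equating coefficients of $q^{bn+j}$. Two differences are minor. Your direct observation that $bm\le bn+j<b(n+1)$ forces $m\le n$ replaces the paper's splitting of $P$ into residue classes mod $b$. Your formal-power-series justification replaces the paper's uniform-convergence argument, and it is equally valid; indeed it makes the growth bound $|z_{i,j}|\le r^j$ unnecessary for the identity itself.
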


\begin{proof}
By the conditions $|z_{i,j}|\leq r^j$ for all $i$ and $j$, all infinite series 
and products that occur are uniformly convergent for $|q| < R$, where $R$ is 
a fixed real constant satisfying $R<1/r$. Therefore all manipulations below 
are justified. The proof that follows is based on the proof of Theorem 4.3 in 
\cite{DE10}.

Starting with \eqref{5.1} with $n$ replaced by $m$, we separate the term for 
$j=0$ and shift $j$ by 1 in the remaining factors. If we then use \eqref{5.5}, 
we obtain
\begin{align*}
\sum_{m=0}^\infty C_b^\rho(m;Z)q^m 
&=\prod_{i=1}^\rho
\left(1+z_{i,1}q+z_{i,2}q^2+\dots\right)\\
&\qquad \times\prod_{j=0}^\infty\prod_{i=1}^\rho
\left(1+z_{i,1}q^{b^{j+1}}+z_{i,2}q^{2\cdot b^{j+1}}+\dots\right)\\
&=\left(\sum_{n=0}^\infty Y_n(Z)q^n\right)
\prod_{j=0}^\infty\prod_{i=1}^\rho\left(1+z_{i,1}\left(q^b\right)^{b^j}
+z_{i,2}\left(q^b\right)^{2\cdot b^{j+1}}+\dots\right),
\end{align*}
Using \eqref{5.1} again, we now have
\begin{equation}\label{5.6}
\sum_{m=0}^\infty C_b^\rho(m;Z)q^m
=\left(\sum_{n=0}^\infty Y_n(Z)q^n\right)
\left(\sum_{\nu=0}^\infty C_b^\rho(\nu;Z)q^{b\nu}\right).
\end{equation}
Next, we write
\begin{equation}\label{5.7}
\sum_{n=0}^\infty Y_n(Z)q^n
=\sum_{k=0}^\infty\sum_{j=0}^{b-1}Y_{kb+j}(Z)q^{kb+j}
=\sum_{k=0}^\infty q^{kb}\sum_{j=0}^{b-1}Y_{kb+j}(Z)q^j.
\end{equation}
In \eqref{5.6}, we now set $m=bn+j$, $0\leq j\leq b-1$, and substitute 
\eqref{5.7} into \eqref{5.6}. We then obtain
\begin{align}
\sum_{n=0}^\infty\sum_{j=0}^{b-1}C_b^\rho(bn+j;Z)q^{bn+j}
&=\left(\sum_{k=0}^\infty q^{kb}\sum_{j=0}^{b-1}Y_{kb+j}(Z)q^j\right)
\left(\sum_{\nu=0}^\infty C_b^\rho(\nu;Z)q^{b\nu}\right)\label{5.8} \\
&=\sum_{n=0}^\infty\sum_{j=0}^{b-1}
\left(\sum_{k=0}^n Y_{kb+j}(Z)C_b^\rho(n-k;Z)\right)q^{bn+j},\nonumber
\end{align}
where we have used a Cauchy product with $k+\nu=n$. Finally, if we equate 
coefficients of the (unique) powers $q^{bn+j}$, we obtain \eqref{5.4}, as
desired.
\end{proof}

To conclude this section, we note that the identity \eqref{5.6} can be 
formulated as a functional equation as follows.

\begin{corollary}\label{cor:5.5}
For integers $b\geq 2$ and $\rho\geq 1$, the generating function
\[
F_b^\rho(q;Z):=\sum_{m=0}^\infty C_b^\rho(m;Z)q^m
\]
satisfies the functional equation
\begin{equation}\label{5.9}
F_b^\rho(q;Z) =\left(\sum_{n=0}^\infty Y_n(Z)q^n\right)F_b^\rho(q^b;Z).
\end{equation}
\end{corollary}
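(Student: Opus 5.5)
The functional equation \eqref{5.9} is a restatement of the identity \eqref{5.6} from the proof of Proposition~\ref{prop:5.4}. The plan is to re-derive it directly at the level of generating functions rather than through the coefficient recurrence \eqref{5.4}.

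First I would write $F_b^\rho(q;Z)$ as the infinite product \eqref{5.1}. Then I split off the factor with $j=0$, which by the defining identity \eqref{5.5} equals $\sum_{n\ge0}Y_n(Z)q^n$.

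In the remaining factors, indexed by $j\geq 1$, I shift the index $j\mapsto j+1$. Using $q^{m\cdot b^{j+1}}=(q^b)^{m\cdot b^j}$, these factors become exactly the product \eqref{5.1} with $q$ replaced by $q^b$. By definition, that product is $F_b^\rho(q^b;Z)$, and this gives \eqref{5.9}.

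The only point needing care is analytic: splitting and re-indexing an infinite product, and identifying the product in $q^b$ with the power series $F_b^\rho(q^b;Z)$. Both steps need convergence.

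As at the start of the proof of Proposition~\ref{prop:5.4}, I would invoke the hypothesis $|z_{i,j}|\le r^j$. This makes every series $1+z_{i,1}q+z_{i,2}q^2+\cdots$ absolutely convergent for $|q|<R<1/r$. The infinite product also converges absolutely and uniformly there, since $\sum_j |q|^{b^j}<\infty$ bounds the deviation of each factor from $1$. Moreover, $|q^b|\le |q|<R$, so $F_b^\rho(q^b;Z)$ is defined on the same disc.

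Alternatively, the identity can be read entirely as one between formal power series in $q$, where the rearrangements are automatic. That reading also recovers \eqref{5.4} by comparing coefficients of $q^{bn+j}$.
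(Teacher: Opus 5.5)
Your proposal is correct and follows the paper's own route. The corollary is a restatement of \eqref{5.6}, which the paper derives in the proof of Proposition~\ref{prop:5.4} exactly as you do: split off the $j=0$ factor of \eqref{5.1}, identify it with $\sum_{n\ge0}Y_n(Z)q^n$ via \eqref{5.5}, re-index the remaining factors as the same product in $q^b$, and justify convergence by $|z_{i,j}|\le r^j$. Your analytic remarks and the formal power series alternative are sound and slightly more explicit than the paper's.
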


We return later to some special cases of the equation \eqref{5.9}.

\section{Applications of the recurrence \eqref{5.4}}\label{sec:6}

In this section we consider two different special cases, corresponding to the
subsections in Section~\ref{sec:4}.

\subsection{All parameters $z_{i,j}=1$}\label{sec:6.1}

As we did before, in this case we set $Y_n:=Y_n(Z)$ and 
$C_b^\rho(n):=C_b^\rho(n,Z)$. Then \eqref{5.5} becomes
\begin{equation}\label{6.1}
\sum_{n=0}^\infty Y_nq^n = \left(1+q+q^2+\dots\right)^\rho=\frac{1}{(1-q)^\rho}
=\sum_{n=0}^\infty\binom{n+\rho-1}{\rho-1}q^n,
\end{equation}
where the right-most binomial identity is well known; see, e.g.,
\cite[eq.~(1.3)]{Go}. By expanding the second term in \eqref{6.1}, we get the
following well-known fact about compositions.

\begin{lemma}\label{lem:6.1}
For integers $\rho\geq 1$ and $n\geq 1$,the number $Y_n$ defined in \eqref{6.1}
is the number of compositions of $n$ into exactly $\rho$ parts. Furthermore,
\begin{equation}\label{6.2}
Y_n = \binom{n+\rho-1}{\rho-1}.
\end{equation}
\end{lemma}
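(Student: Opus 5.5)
The plan is to read both claims of Lemma~\ref{lem:6.1} off the generating function \eqref{6.1}, which we expand in two different ways.

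For the combinatorial claim, we use the middle expression $\left(1+q+q^2+\cdots\right)^\rho$. We label the $\rho$ factors $1,\dots,\rho$ in order. Multiplying out, a term $q^n$ arises from a choice of exponents $(a_1,\ldots,a_\rho)$ with $a_i\geq 0$ and $a_1+\cdots+a_\rho=n$. Hence $Y_n$ counts ordered $\rho$-tuples of nonnegative integers summing to $n$, that is, weak compositions of $n$ into exactly $\rho$ parts. Parts equal to zero are allowed here, so the ordinary compositions of the statement are obtained by the same device as in the proof of Lemma~\ref{lem:3.9}(a). Adding $1$ to each $a_i$ gives a bijection with compositions of $n+\rho$ into exactly $\rho$ positive parts.

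Because of this shift, the statement should be read as counting compositions of $n$ into $\rho$ nonnegative parts, or equivalently compositions of $n+\rho$ into $\rho$ positive parts. A quick check shows the literal reading is wrong: taking $n=1$ and $\rho=2$ gives $Y_1=2$, which matches the weak compositions $1+0$ and $0+1$, and not the count of positive compositions of $1$. I will state the interpretation this way, consistent with Lemma~\ref{lem:3.9}.

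For the closed formula \eqref{6.2}, I would give a stars-and-bars argument, independent of the cited binomial series. A composition of $n+\rho$ into $\rho$ positive parts is determined by choosing $\rho-1$ cut points among the $n+\rho-1$ gaps between consecutive units. This gives $\binom{n+\rho-1}{\rho-1}$ choices. As a cross-check, the same value follows from the negative binomial expansion $(1-q)^{-\rho}=\sum_n\binom{-\rho}{n}(-q)^n$, together with $\binom{-\rho}{n}(-1)^n=\binom{n+\rho-1}{n}$, which gives the right-most equality in \eqref{6.1}.

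No step is really an obstacle. The only point needing care is the zero-part convention just described, and it is settled by the shift bijection already used in Lemma~\ref{lem:3.9}.
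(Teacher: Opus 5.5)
Your proposal is correct and follows the paper's route: the paper also reads $Y_n$ off the expansion of $(1+q+q^2+\cdots)^\rho$ and takes \eqref{6.2} from the standard binomial series for $(1-q)^{-\rho}$, and your stars-and-bars count is just a self-contained substitute for that citation. Your caveat is also right: with compositions having positive parts, as in the paper's own definition just before Lemma~\ref{lem:3.9}, the literal statement fails (e.g.\ $n=1$, $\rho=2$ gives $Y_1=2$), so it must be read as compositions of $n$ into $\rho$ nonnegative parts, or equivalently of $n+\rho$ into $\rho$ positive parts.
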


Combining this lemma with Lemma~\ref{5.2} and using Proposition~\ref{prop:5.4},
we get the following result.

\begin{proposition}\label{prop:6.2}
Let $\rho\geq 1$ and $b\geq 2$ be integers. Then
\begin{enumerate}
\item[(a)] the numbers $C_b^\rho(n)$ of $\rho$-color $b$-ary partitions 
of $n$, and 
\item[(b)] the numbers $Y_n$ of compositions of $n$ into $\rho$ parts
\end{enumerate}
are connected by the recurrence relation
\begin{equation}\label{6.3}
C_b^\rho(bn+j)=\sum_{k=0}^n Y_{bk+j}C_b^\rho(n-k) \qquad (j=0, 1,\ldots b-1).
\end{equation}
\end{proposition}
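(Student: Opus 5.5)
The plan is to obtain this directly as the specialization of Proposition~\ref{prop:5.4} to the case in which every parameter equals $1$. The only work is to check that the hypotheses of Definition~\ref{def:5.1} are met, and then to reinterpret the two sequences in the resulting recurrence.

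First, set $z_{i,j}=1$ for all $1\leq i\leq\rho$ and $j\geq 1$. The growth condition $|z_{i,j}|\leq r^j$ in Definition~\ref{def:5.1} then holds with $r=1$. So Proposition~\ref{prop:5.4} applies, with convergence guaranteed for $|q|<R<1$. By \eqref{5.2}, the sequence $C_b^\rho(n;Z)$ is exactly $C_b^\rho(n)$. By \eqref{5.5} and \eqref{6.1}, the coefficient sequence $Y_n(Z)$ becomes $Y_n=\binom{n+\rho-1}{\rho-1}$. Substituting these into \eqref{5.4} gives \eqref{6.3} verbatim, with the same summation range $0\leq k\leq n$ and the same residues $j=0,1,\ldots,b-1$.

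Second, I attach the combinatorial meanings. Lemma~\ref{lem:5.2} states that $C_b^\rho(n)$ counts the $\rho$-color $b$-ary partitions of $n$, which is (a). Lemma~\ref{lem:6.1} states that $Y_n$ counts the compositions of $n$ into exactly $\rho$ parts, which is (b). Expanding $(1+q+q^2+\cdots)^\rho$ shows that $Y_n$ is the number of ordered $\rho$-tuples of nonnegative integers with sum $n$. So "compositions into $\rho$ parts" must be read as in Lemma~\ref{lem:6.1}, that is, with zero parts allowed. Equivalently, adding $1$ to each part as in the proof of Lemma~\ref{lem:3.9}(a), $Y_n$ counts the compositions of $n+\rho$ into $\rho$ positive parts. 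Either reading agrees with \eqref{6.2}.

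The main obstacle is this interpretive point rather than any computation. I need to make sure that $Y_0=1$ is consistent with the convention $C_b^\rho(0)=1$, and that the counting in (b) matches $\binom{n+\rho-1}{\rho-1}$ rather than $\binom{n-1}{\rho-1}$. Once the meaning of $Y_n$ is fixed by Lemma~\ref{lem:6.1}, the proof is just the combination of Proposition~\ref{prop:5.4}, Lemma~\ref{lem:5.2} and Lemma~\ref{lem:6.1}. As a sanity check, I would verify the case $b=\rho=2$, $n=2$, $j=0$ against \eqref{5.3}: $C(4)=Y_0C(2)+Y_2C(1)+Y_4C(0)=5+3\cdot2+5=16$, which matches.
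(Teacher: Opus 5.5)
Your proposal is correct and matches the paper's argument: the paper obtains this result by specializing Proposition~\ref{prop:5.4} to $z_{i,j}=1$ and interpreting the two sequences via Lemma~\ref{lem:5.2} and Lemma~\ref{lem:6.1}. You are also right that (b) must be read as compositions allowing zero parts (equivalently, compositions of $n+\rho$ into $\rho$ positive parts) to agree with $Y_n=\binom{n+\rho-1}{\rho-1}$; the paper's wording does not say this explicitly.
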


We now consider a few special cases.

\begin{corollary}\label{cor:6.3}
Let $b\geq 2$. The sequence $\left(C_b^1(n)\right)_{n\geq 0}$, which gives the
numbers of $b$-ary partitions of $n$, satisfies the recurrence relations
\begin{equation}\label{6.4}
C_b^1(bn+j)=\sum_{k=0}^n C_b^1(n-k) \qquad (j=0, 1,\ldots b-1),
\end{equation}
and consequently
\begin{align}
&C_b^1(bn) = C_b^1(bn+1) = \dots = C_b^1(bn+b-1),\label{6.5}\\
&C_b^1(bn) - C_b^1(bn-1) = C_b^1(n).\label{6.6}
\end{align}
\end{corollary}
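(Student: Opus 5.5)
The plan is to specialize Proposition~\ref{prop:6.2} to $\rho=1$. By \eqref{6.2}, or directly from \eqref{6.1} with $\rho=1$, we have $\sum_n Y_nq^n=1/(1-q)$, so $Y_n=\binom{n}{0}=1$ for all $n\geq 0$. Substituting $Y_{bk+j}=1$ into \eqref{6.3} immediately gives \eqref{6.4} for each $j=0,1,\ldots,b-1$. By Lemma~\ref{lem:5.2}, $C_b^1(n)$ is indeed the number of (one-color) $b$-ary partitions of $n$, which justifies the interpretation stated in the corollary.

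Next, \eqref{6.5} follows from \eqref{6.4} without any further work. The right-hand side $\sum_{k=0}^n C_b^1(n-k)$ does not depend on $j$, so all $b$ values $C_b^1(bn+j)$, $0\leq j\leq b-1$, coincide.

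For \eqref{6.6}, I would take $n\geq 1$, since $C_b^1(-1)=0$ and the case $n=0$ reads $1-0=1$, which holds trivially. Write $bn-1=b(n-1)+(b-1)$ and apply \eqref{6.4} with $j=b-1$ and $n$ replaced by $n-1$. This gives $C_b^1(bn-1)=\sum_{k=0}^{n-1}C_b^1(n-1-k)=\sum_{m=0}^{n-1}C_b^1(m)$. On the other hand, \eqref{6.4} with $j=0$ gives $C_b^1(bn)=\sum_{m=0}^{n}C_b^1(m)$. Subtracting the two telescopes to $C_b^1(n)$.

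No real obstacle is expected. The only care needed is the index shift in the last step and the boundary case $n=0$. Alternatively, one could derive everything from the functional equation \eqref{5.9}, which here reads $F(q)=F(q^b)/(1-q)$.
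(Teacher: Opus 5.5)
Your proposal is correct and follows the paper's own proof: set $\rho=1$ so that $Y_n=1$ by \eqref{6.2}, read off \eqref{6.4} from \eqref{6.3}, get \eqref{6.5} from the $j$-independence of the right-hand side, and obtain \eqref{6.6} by subtracting the appropriate instances of \eqref{6.4}. You also spell out the index shift $bn-1=b(n-1)+(b-1)$ and the boundary case $n=0$, which the paper leaves implicit.
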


\begin{proof}
The first statement follows from Lemma~\ref{lem:5.2}, and \eqref{6.4} follows
from \eqref{6.3} since by \eqref{6.2} we have $Y_n=1$ for all $n\geq 0$.
The identity \eqref{6.5} is a consequence of \eqref{6.4} since the right-hand
side of \eqref{6.4} does not depend on $j$, for $0\leq j\leq b-1$. Finally,
\eqref{6.6} is obtained by subtracting the corresponding cases of \eqref{6.4}
from each other, which leaves only $C_b^1(n)$ on the right.
\end{proof}

The identities \eqref{6.5} and \eqref{6.6} could also be obtained by 
combinatorial arguments, and \eqref{6.4} then follows from \eqref{6.5} and
\eqref{6.6} by telescoping. The case $b=2$ can be found in entry 
\cite[A018819]{OEIS} under ``Formula". 

Using second differences and the fact that for $\rho=2$ we have $Y_n=n+1$, we
get the following identities after some easy manipulations with the recurrence
\eqref{6.3}.

\begin{corollary}\label{cor:6.4}
Let $b\geq 2$ be an integer. Then the sequence 
$\left(C_b^2(n)\right)_{n\geq 0}$ of $2$-color $b$-ary partitions of $n$ 
satisfies the identities
\begin{align}
&C_b^2(bn+1)-2C_b^2(bn)+C_b^2(bn-1)=0,\label{6.7}\\
&C_b^2(bn)-2C_b^2(bn-1)+C_b^2(bn-2)=C_b^2(n),\label{6.8}
\end{align}
valid for all integers $n\geq 1$.
\end{corollary}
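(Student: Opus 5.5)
The plan is to specialize the recurrence \eqref{6.3} of Proposition~\ref{prop:6.2} to $\rho=2$. By \eqref{6.2} we then have $Y_m=m+1$, so for $n\geq 0$ and $0\leq j\leq b-1$ (writing $C:=C_b^2$)
\[
C(bn+j)=\sum_{k=0}^n (bk+j+1)\,C(n-k).
\]
I will compute three consecutive first differences of the form $C(m)-C(m-1)$, namely for $m=bn+1$, $m=bn$ and $m=bn-1$. In each case I will show that the linear weight $bk+\cdot$ cancels, leaving a plain partial sum of the sequence $C$. Both identities \eqref{6.7} and \eqref{6.8} then follow by subtracting consecutive first differences.

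\emph{Step 1.} Since $b\geq 2$, the index $j=1$ is allowed. Subtracting the cases $j=0$ and $j=1$ gives
\[
C(bn+1)-C(bn)=\sum_{k=0}^n C(n-k).
\]

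\emph{Step 2.} For $n\geq 1$ write $bn-1=b(n-1)+(b-1)$. Then
\[
C(bn-1)=\sum_{k=0}^{n-1}(bk+b)\,C(n-1-k)=\sum_{k'=1}^{n} bk'\,C(n-k'),
\]
after the shift $k'=k+1$. The summand $bk'C(n-k')$ vanishes at $k'=0$, so the sum may be extended to $k'=0$. Comparing with $C(bn)=\sum_{k=0}^n(bk+1)C(n-k)$ gives
\[
C(bn)-C(bn-1)=\sum_{k=0}^n C(n-k).
\]
Subtracting this from Step 1 yields \eqref{6.7}.

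\emph{Step 3.} For $C(bn-1)-C(bn-2)$ I would separate $b\geq 3$ from $b=2$; this case distinction is the only mildly delicate point.
\begin{itemize}
\item If $b\geq 3$, then $bn-1$ and $bn-2$ are the residues $j=b-1$ and $j=b-2$ above $b(n-1)$. Their difference is therefore $\sum_{k=0}^{n-1}C(n-1-k)$.
\item If $b=2$, then $bn-1=2(n-1)+1$ and $bn-2=2(n-1)$. This is exactly the situation of Step 1 with $n$ replaced by $n-1$, and it gives the same sum $\sum_{k=0}^{n-1}C(n-1-k)$.
\end{itemize}
In both cases, subtracting this from the result of Step 2 telescopes:
\[
\sum_{k=0}^{n}C(n-k)-\sum_{k=0}^{n-1}C(n-1-k)=\sum_{m=0}^{n}C(m)-\sum_{m=0}^{n-1}C(m)=C(n).
\]
This is \eqref{6.8}. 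The hypothesis $n\geq 1$ guarantees that all indices used, in particular $bn-2\geq 0$, are nonnegative.
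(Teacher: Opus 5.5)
Your proof is correct and follows the route the paper indicates: specialize \eqref{6.3} to $\rho=2$ with $Y_m=m+1$ and take second differences, which you carry out in full detail. The split into $b\geq 3$ and $b=2$ in Step~3 is harmless but unnecessary, since $bn-2=b(n-1)+(b-2)$ with $j=b-2\geq 0$ an admissible residue for every $b\geq 2$.
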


For $b=2$, this result is stated without proof in \cite[A171238]{OEIS}.
The sequence $C_3^2(n)$ is mentioned as A309677 in \cite{OEIS}.

We will now show that the pairs of identities \eqref{6.5}, \eqref{6.6} and
\eqref{6.7}, \eqref{6.8} have natural extensions to all $b$ and $\rho$.

\begin{proposition}\label{prop:6.5}
Let $b\geq 2$ and $\rho\geq 1$ be integers. Then the sequence
$\left(C_b^\rho(n)\right)_{n\geq 0}$ of $\rho$-color $b$-ary partitions of $n$
satisfies the identities
\begin{align}
&\sum_{i=0}^\rho(-1)^i\binom{\rho}{i}C_b^\rho(bn-i)=C_b^\rho(n),\label{6.9}\\
&\sum_{i=0}^\rho(-1)^i\binom{\rho}{i}C_b^\rho(bn+r-i)=0\quad (1\leq r\leq b-1),
\label{6.10}
\end{align}
where we assume that $C_b^\rho(m)=0$ for $m<0$.
\end{proposition}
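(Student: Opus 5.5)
The plan is to work at the level of generating functions, using the functional equation \eqref{5.9} of Corollary~\ref{cor:5.5} with all $z_{i,j}=1$. By \eqref{6.1} this reads
\[
F_b^\rho(q)=\frac{1}{(1-q)^\rho}\,F_b^\rho(q^b),\qquad F_b^\rho(q):=\sum_{m=0}^\infty C_b^\rho(m)q^m,
\]
which can also be read off directly from the product in \eqref{5.2} by separating the factor $j=0$. Multiplying through by $(1-q)^\rho$ gives
\[
(1-q)^\rho F_b^\rho(q)=F_b^\rho(q^b)=\sum_{n=0}^\infty C_b^\rho(n)q^{bn}.
\]

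Next we expand the left-hand side with the binomial theorem, $(1-q)^\rho=\sum_{i=0}^\rho(-1)^i\binom{\rho}{i}q^i$, and take a Cauchy product. The coefficient of $q^m$ on the left is
\[
\sum_{i=0}^\rho(-1)^i\binom{\rho}{i}C_b^\rho(m-i),
\]
with the convention $C_b^\rho(m)=0$ for $m<0$. This convention is exactly what is needed, because terms with $i>m$ do not appear in the Cauchy product.

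On the right-hand side, the coefficient of $q^m$ is $C_b^\rho(n)$ if $m=bn$, and $0$ if $m\not\equiv 0\pmod b$. Writing $m=bn$ gives \eqref{6.9}. Writing $m=bn+r$ with $1\le r\le b-1$ gives \eqref{6.10}. Both hold for all $n\ge 0$; for $n=0$, \eqref{6.9} reduces to the trivial identity $C_b^\rho(0)=C_b^\rho(0)$.

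There is no real obstacle here. The only points needing care are keeping the convention for negative arguments consistent and noting that convergence for $|q|<1$ (or working with formal power series) justifies equating coefficients. As a sanity check, the cases $\rho=1$ and $\rho=2$ recover \eqref{6.5}, \eqref{6.6} and \eqref{6.7}, \eqref{6.8} respectively.
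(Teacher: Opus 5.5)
Your proposal is correct and follows essentially the same route as the paper. Both arguments combine the functional equation \eqref{5.9} with \eqref{6.1}, multiply through by $(1-q)^\rho$, expand it binomially and form the Cauchy product. They then compare the coefficients of $q^{bn}$ and $q^{bn+r}$ with $1\leq r\leq b-1$.
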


\begin{proof}
Since we have $z_{i,j}=1$ for all $i$ and $j$, we can combine \eqref{6.1} with 
\eqref{5.9} and expand $(1-q)^\rho$, obtaining
\begin{equation}\label{6.11}
\left(\sum_{m=0}^\infty C_b^\rho(m)q^m\right)
\left(\sum_{j=0}^\rho(-1)^j\binom{\rho}{j}q^j\right)
= \sum_{m=0}^\infty C_b^\rho(m)q^{bm}.
\end{equation}
We now take the Cauchy product on the left, setting $n=m+j$. Then \eqref{6.11}
gives
\begin{equation}\label{6.12}
\sum_{n=0}^\infty\left(\sum_{j=0}^\rho(-1)^j\binom{\rho}{j}C_b^\rho(n-j)\right)q^n
= \sum_{m=0}^\infty C_b^\rho(m)q^{bm},
\end{equation}
with the understanding that $C_b^\rho(\ell)=0$ for $\ell< 0$. By equating the
coefficients of equal powers of $q$ in \eqref{6.12}, we get the following two
cases:

(i) When $n=bm$, $m=0, 1, 2,\ldots$, we get
\[
\sum_{j=0}^\rho(-1)^j\binom{\rho}{j}C_b^\rho(bm-j) = C_b^\rho(m),
\]
which is the same as \eqref{6.9}.

(ii) Similarly, when $n=bm+r$, $1\leq r\leq b-1$, the corresponding sum will
be 0, which is the same as \eqref{6.10}.
\end{proof}

\subsection{Different parameters $z_{i,j}$}\label{sec:6.2}

In this subsection we deal with the unbounded versions of the definitions and
results in Subsection~\ref{sec:4.2}.

\begin{definition}\label{def:6.6}
Let $b\geq 2$ and $\rho\geq 1$ be integers. We choose the parameters $z_{i,j}$ 
in Definition~\ref{def:5.1} in such a way that we obtain the generating 
function
\begin{equation}\label{6a.1}
\sum_{n=0}^\infty\overline{C}_b^\rho(n)q^n=\prod_{j=0}^\infty\prod_{i=1}^\rho
\left(1+q^{ib^j}+q^{2ib^j}+q^{3ib^j}+\dots\right).
\end{equation}
\end{definition}

It is clear that we can rewrite \eqref{6a.1} as
\begin{equation}\label{6a.2}
\sum_{n=0}^\infty\overline{C}_b^\rho(n)q^n=\prod_{j=0}^\infty\prod_{i=1}^\rho
\frac{1}{1-q^{ib^j}}.
\end{equation}

We chose the notation $\overline{C}_b^\rho(n):=C_b^\rho(n;Z)$ to distinguish 
this sequence from $C_b^\rho(n)$ defined in Subsection~\ref{sec:6.1}, where 
in this case $Z$ is implicitly chosen on the right of \eqref{6a.1}. 
By expanding the 
right-hand side of \eqref{6a.1} and collecting powers of $q$, we obtain the
following interpretation of $\overline{C}_b^\rho(n)$.

\begin{lemma}\label{lem:6.7}
Given the integers $b\geq 2$, $\rho\geq 1$, and $n\geq 1$, the coefficient
$\overline{C}_b^\rho(n)$ counts the number of $\rho$-color $b$-ary partitions
of $n$ in which the multiplicity of each part with color $i$ is a multiple
of $i$.
\end{lemma}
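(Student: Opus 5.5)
The plan is to read off the combinatorial meaning of each factor in the generating function \eqref{6a.1}, exactly as was done for \eqref{3.15} and in Lemma~\ref{lem:5.2}. I will fix $j\geq 0$ and $1\leq i\leq\rho$ and identify the factor
\[
1+q^{ib^j}+q^{2ib^j}+q^{3ib^j}+\cdots=\sum_{m=0}^\infty q^{(mi)b^j}
\]
with the choice of how many copies of the part $b^j$ receive color $i$. The term $q^{(mi)b^j}$ records exactly $mi$ copies of $b^j$ in color $i$. So the admissible multiplicities of $b^j$ in color $i$ are precisely $0,i,2i,\ldots$, namely the multiples of $i$. Equivalently, this corresponds to the parameter choice $z_{i,t}=1$ when $i\mid t$ and $z_{i,t}=0$ otherwise in Definition~\ref{def:5.1}. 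These parameters satisfy $|z_{i,t}|\leq 1$, so the convergence hypothesis holds with $r=1$.

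Next I will expand the double product. For a fixed $n$, only finitely many pairs $(i,j)$ have $b^j\leq n$; for all other pairs only the constant term $1$ can contribute. Hence the coefficient of $q^n$ is a finite sum. It counts the families of nonnegative integers $(m_{i,j})$ with $\sum_{i,j} m_{i,j}\,i\,b^j=n$. Each such family corresponds bijectively to a $\rho$-color $b$-ary partition of $n$ in which the part $b^j$ appears with color $i$ exactly $i\,m_{i,j}$ times, because a colored partition is determined by these multiplicities. The bijection sends every multiplicity to a multiple of $i$, and every partition with that property arises uniquely in this way.

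There is no real obstacle here. The only point needing care is the bookkeeping: a colored $b$-ary partition must be determined entirely by its multiplicity data $(\text{part},\text{color})\mapsto$ count. This is the same convention used in Definition~\ref{def:2.1}, where partitions are written in nonincreasing order with color subscripts. Once that is noted, comparing the coefficients of $q^n$ on both sides of \eqref{6a.1} gives the statement of Lemma~\ref{lem:6.7}.
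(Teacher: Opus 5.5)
Your proposal is correct and takes essentially the same approach as the paper, which simply expands the product in \eqref{6a.1} and collects powers of $q$. You add explicit bookkeeping that the paper leaves implicit: the parameter choice $z_{i,t}=1$ if $i\mid t$ and $z_{i,t}=0$ otherwise, the finiteness of each coefficient, and the bijection between the families $(m_{i,j})$ and the colored multiplicity data.
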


\begin{example}\label{ex:6.8}
{\rm We consider the unrestricted version of Example~\ref{ex:3.11} by taking
$b=2$ and $\rho=4$. Then the first few terms of \eqref{6a.1} are
\begin{equation}\label{6a.3}
\sum_{n=0}^\infty\overline{C}_2^4(n)q^n
=1+q+3q^2+4q^3+10q^4+12q^5+24q^6+30q^7+\dots
\end{equation}
We take again $n=5$ and find the 4-color binary partitions in which the parts
of color 1 may occur with any multiplicity, those of color 2 only with even
multiplicities, and those of colors 3 and 4 with multiplicities that are 
multiples of 3 and 4, respectively. In addition to the 9 partitions listed in
Example~\ref{ex:3.11}, we then have
\[
2_1+(1_1+1_1+1_1),\quad (1_1+1_1+1_1)+(1_2+1_2),\quad (1_1+1_1+1_1+1_1+1_1).
\]
We see that the total number of 12 such partitions is consistent with the 
right-hand side of \eqref{6a.3}.}
\end{example}

With the aim of applying Proposition~\ref{prop:5.4}, we now let
$\overline{Y}_n:=Y_n(Z)$, where the collection of parameters $Z$ is again as
in Definition~\ref{6.6}, so that
\begin{equation}\label{6a.4}
\sum_{n=0}^\infty\overline{Y}_nq^n = 
\prod_{i=1}^\rho\left(1+q^i+q^{2i}+q^{3i}+\dots\right)
=\prod_{i=1}^\rho\frac{1}{1-q^i}.
\end{equation}
Upon expanding the product in the middle term, we see that $\overline{Y}_n$
is the number of partitions of $n$ with parts not exceeding $\rho$. It is 
also the number of partitions of $n$ with at most $\rho$ parts; see 
Proposition~\ref{prop:2.4} and Example~\ref{ex:2.5}.

Combining this fact about $\overline{Y}_n$ with Lemma~\ref{lem:6.7} and
Proposition~\ref{prop:5.4}, we get the following result.

\begin{proposition}\label{prop:6.9}
Let $\rho\geq 1$ and $b\geq 2$ be integers. Then
\begin{enumerate}
\item[(a)] the numbers $\overline{C}_b^\rho(n)$ of $\rho$-color $b$-ary 
partitions of $n$ in which the multiplicity of each part with color $i$ is a
multiple of $i$ and
\item[(b)] the numbers $\overline{Y}_n$ of ordinary partitions of $n$ with
parts not exceeding $\rho$
\end{enumerate}
are connected by the recurrence relation
\begin{equation}\label{6a.5}
\overline{C}_b^\rho(bn+j)
=\sum_{k=0}^n\overline{Y}_{bk+j}\overline{C}_b^\rho(n-k) \qquad 
(j=0, 1,\ldots b-1).
\end{equation}
\end{proposition}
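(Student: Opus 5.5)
This is a direct specialization of Proposition~\ref{prop:5.4}, so the plan is to identify the correct choice of parameters $Z$ and then read off the recurrence.

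First I fix the parameters implicit in Definition~\ref{def:6.6}. For each color $i$ with $1\leq i\leq\rho$, set $z_{i,m}=1$ when $i\mid m$ and $z_{i,m}=0$ otherwise. Then the inner factor of \eqref{5.1} is
\[
1+\sum_{m\geq 1}z_{i,m}q^{m b^j}=1+q^{ib^j}+q^{2ib^j}+\cdots,
\]
which is exactly the factor appearing in \eqref{6a.1}. Since every $|z_{i,m}|\leq 1$, the growth condition of Definition~\ref{def:5.1} holds with $r=1$. This places us inside the framework of Proposition~\ref{prop:5.4}, with $C_b^\rho(n;Z)=\overline{C}_b^\rho(n)$.

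Next I compute the coefficient sequence. With the same parameters, \eqref{5.5} becomes \eqref{6a.4}, so $Y_n(Z)=\overline{Y}_n$. Expanding the middle product in \eqref{6a.4} term by term (one geometric factor per part size $i\leq\rho$) shows that $\overline{Y}_n$ counts the ordinary partitions of $n$ with parts not exceeding $\rho$; this is the same reasoning as for \eqref{2.5}. Lemma~\ref{lem:6.7} supplies the interpretation (a) of $\overline{C}_b^\rho(n)$.

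Finally I substitute into \eqref{5.4}. This gives
\[
\overline{C}_b^\rho(bn+j)=\sum_{k=0}^n\overline{Y}_{bk+j}\,\overline{C}_b^\rho(n-k),\qquad 0\leq j\leq b-1,
\]
which is \eqref{6a.5}.

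No step is a real obstacle here. The only points that need care are checking the convergence hypothesis $|z_{i,j}|\leq r^j$ (immediate with $r=1$) and confirming that the parameter choice reproduces \eqref{6a.1} and \eqref{6a.4} simultaneously, which it does because both use the same $z_{i,m}$.
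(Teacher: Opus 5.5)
Your proposal is correct and follows the paper's own argument: the paper also obtains this result by specializing Proposition~\ref{prop:5.4} to the parameters of Definition~\ref{def:6.6}, reading off $\overline{Y}_n$ from \eqref{6a.4} and the interpretation of $\overline{C}_b^\rho(n)$ from Lemma~\ref{lem:6.7}. You go slightly further than the paper by writing the parameters explicitly as $z_{i,m}=1$ for $i\mid m$ and $z_{i,m}=0$ otherwise, and by checking the growth condition with $r=1$, both of which the paper leaves implicit.
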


We now consider two special cases of Proposition~6.9. First, when $\rho=1$,
then $C_b^1(n)$ is just the number of $b$-ary partitions of $n$, and 
$\overline{Y}_{n}=1$ for all $n$. In this case, Proposition~\ref{prop:6.9}
reduces to Corollary~\ref{cor:6.3}.

For the second special case, $\rho=2$, we require the following well-known
fact; see, e.g., \cite[A004526]{OEIS}. For the sake of completeness, we quote
the simple proof.

\begin{lemma}\label{lem:6.10}
The number of partitions of $n$ with at most two parts is
$\lfloor\frac{n+2}{2}\rfloor$.
\end{lemma}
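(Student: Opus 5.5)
A partition of $n$ into at most two parts is either the single part $n$ (when $n\geq 1$), the empty partition (when $n=0$), or a pair $n=a+c$ with $a\geq c\geq 1$. We will count these directly.

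For $n=0$ there is exactly one partition, the empty one, and $\lfloor 2/2\rfloor=1$. So assume $n\geq 1$.

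The one-part partition contributes $1$. A two-part partition $a+c$ with $a\geq c\geq 1$ is determined by the smaller part $c$, which must satisfy $1\leq c\leq n/2$. Hence there are exactly $\lfloor n/2\rfloor$ two-part partitions. The total is therefore $1+\lfloor n/2\rfloor=\lfloor (n+2)/2\rfloor$, as claimed.

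As a cross-check, one can also use the generating function \eqref{6a.4} with $\rho=2$, since partitions with at most two parts are equinumerous with partitions into parts not exceeding $2$. Such a partition is determined by the number $t$ of parts equal to $2$, with $0\leq t\leq\lfloor n/2\rfloor$. This again gives $\lfloor n/2\rfloor+1$.

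The argument is elementary, so there is no real obstacle. The only point needing care is the boundary case $n=0$, which the formula handles correctly.
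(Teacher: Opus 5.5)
Your proof is correct and is essentially the paper's argument: the paper also enumerates the partitions directly, treating $n=2k$ and $n=2k+1$ separately and listing them to find $k+1$ in each case. Parametrizing by the smaller part $c\le n/2$, as you do, just replaces that parity split with the single count $1+\lfloor n/2\rfloor$, and you also check the case $n=0$ explicitly, which the paper does not.
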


\begin{proof}
We treat the cases $n$ even and odd separately. When $n=2k$, then all the
allowable partitions of $n$ are $2k, (2k-1)+1, (2k-2)+2, \ldots, k+k$, so there
are $k+1$ of them. When $n=2k+1$, then we count the partitions
$(2k+1), 2k+1, (2k-1)+2, \ldots, (k+1)+k$, and we have again $k+1$ partitions.
These two evaluations can be combined as $\lfloor\frac{n+2}{2}\rfloor$.
\end{proof} 

Combining Lemma~\ref{lem:6.10} with Proposition~\ref{prop:6.9}, we now have the
following result.

\begin{corollary}\label{cor:6.11}
Let $b\geq 2$. then the sequence $\overline{C}_b^2(n)$ satisfies the recurrence
relations 
\begin{equation}\label{6a.6}
\overline{C}_b^2(bn+j)
=\sum_{k=0}^n\left\lfloor\frac{bk+j+2}{2}\right\rfloor\overline{C}_b^2(n-k) 
\qquad (j=0, 1,\ldots b-1),
\end{equation}
where $\overline{C}_b^2(n)$ is the number of $2$-color $b$-ary partitions of
$n$, in which there can be any number of parts with color $1$, but only an 
even number of parts with color $2$.

In particular, we have in the binary case,
\begin{equation}\label{6a.7}
\overline{C}_2^2(2n)=\overline{C}_2^2(2n+1)
=\sum_{k=0}^n(k+1)\overline{C}_2^2(n-k).
\end{equation}
\end{corollary}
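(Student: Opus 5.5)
The plan is to specialize Proposition~\ref{prop:6.9} to $\rho=2$ and then substitute the explicit value of $\overline{Y}_n$ from Lemma~\ref{lem:6.10}. For $\rho=2$, the discussion after \eqref{6a.4} shows that $\overline{Y}_n$ is the number of partitions of $n$ with parts not exceeding $2$. By conjugation, this is the same as the number of partitions of $n$ with at most two parts; the case $k=2$ of (a) and (a$'$) after Proposition~\ref{prop:2.4} records this. Lemma~\ref{lem:6.10} then gives $\overline{Y}_n=\lfloor (n+2)/2\rfloor$ for $n\geq 1$. The value $\overline{Y}_0=1$ also agrees with the formula. We can also check this directly, since \eqref{6a.4} gives $1/((1-q)(1-q^2))$, whose coefficients are $\lfloor n/2\rfloor+1$.

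Inserting $\overline{Y}_{bk+j}=\lfloor (bk+j+2)/2\rfloor$ into \eqref{6a.5} gives \eqref{6a.6} at once. The combinatorial description of $\overline{C}_b^2(n)$ is Lemma~\ref{lem:6.7} with $\rho=2$. There, color $1$ has multiplicity a multiple of $1$, which means any multiplicity, and color $2$ has even multiplicity.

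For the binary case \eqref{6a.7}, set $b=2$. For $j=0$ the coefficient is $\lfloor (2k+2)/2\rfloor=k+1$. For $j=1$ it is $\lfloor (2k+3)/2\rfloor=k+1$. So both right-hand sides equal $\sum_{k=0}^n (k+1)\overline{C}_2^2(n-k)$, and this yields both the equality $\overline{C}_2^2(2n)=\overline{C}_2^2(2n+1)$ and the summation formula.

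There is no real obstacle, since this is a direct substitution. The only point that needs care is making sure the formula for $\overline{Y}_n$ also covers $n=0$, because Lemma~\ref{lem:6.10} is stated for partitions of $n$ and the recurrence uses the term $k=0$, $j=0$. The generating-function check above settles this.
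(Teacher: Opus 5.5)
Your proposal is correct and follows the paper's route: the paper obtains the corollary by combining Lemma~\ref{lem:6.10} with Proposition~\ref{prop:6.9} at $\rho=2$, and then evaluates the floor for $b=2$, $j=0,1$. Your added check that $\overline{Y}_0=1$ also fits $\lfloor (n+2)/2\rfloor$, which you do via $1/((1-q)(1-q^2))$, covers a point the paper leaves implicit.
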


The identity \eqref{6a.7} is reminiscent of Corollary~\ref{cor:6.3} and the
paragraph preceding Corollary~\ref{cor:6.4}. Indeed, with some straightforward
manipulations involving \eqref{6a.7}, we obtain the following identity.

\begin{corollary}\label{cor:6.12}
For all $n\geq 2$,
\begin{equation}\label{6a.8}
\overline{C}_2^2(2n)-2\overline{C}_2^2(2n-2)+\overline{C}_2^2(2n-4)
=\overline{C}_2^2(n).
\end{equation}
\end{corollary}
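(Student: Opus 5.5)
Let $a_n:=\overline{C}_2^2(n)$. The plan is to start from \eqref{6a.7}, which for every $m\ge 0$ gives
\[
a_{2m}=\sum_{k=0}^m (k+1)\,a_{m-k}=\sum_{\nu=0}^m (m-\nu+1)\,a_\nu ,
\]
where we reindex with $\nu=m-k$. So $a_{2m}$ is a convolution of $(a_\nu)$ with the linear weight $m-\nu+1$. Applying the second difference operator in $m$ should collapse this weight to a single term. That is the analogue of the argument behind Corollary~\ref{cor:6.4}, and of the derivation of \eqref{6.6} from \eqref{6.4}.

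The first step is to compute the first difference. For $m\ge 1$, subtracting the case $m-1$ from the case $m$ gives
\[
a_{2m}-a_{2m-2}=\sum_{\nu=0}^m (m-\nu+1)a_\nu-\sum_{\nu=0}^{m-1}(m-\nu)a_\nu
= a_m+\sum_{\nu=0}^{m-1}a_\nu=\sum_{\nu=0}^m a_\nu .
\]
Here the $\nu=m$ term of the first sum contributes $a_m$, and for $\nu\le m-1$ the weights differ by exactly $1$. This is the same telescoping as in Corollary~\ref{cor:6.3}.

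The second step is to take one more difference. With $m=n\ge 2$ and $m=n-1\ge 1$ we get
\[
(a_{2n}-a_{2n-2})-(a_{2n-2}-a_{2n-4})=\sum_{\nu=0}^n a_\nu-\sum_{\nu=0}^{n-1}a_\nu=a_n ,
\]
which is exactly \eqref{6a.8}.

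As a cross-check, the same identity follows from the functional equation \eqref{5.9}, which here reads $F(q)=F(q^2)/\bigl((1-q)(1-q^2)\bigr)$. Since $a_{2m}=a_{2m+1}$, the even part of $F$ is $G(q^2)=(1+q)^{-1}\cdot$(a suitable combination of $F$). Multiplying $F(q)$ by $(1-q^2)^2$ and extracting the even coefficients should reproduce $a_{2n}-2a_{2n-2}+a_{2n-4}=a_n$. The difference argument above is more direct, though, and it is the one I would write out.

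There is no real obstacle in this argument. The only point needing care is the index range. The step-one identity needs $m\ge1$, so step two needs both $n\ge1$ and $n-1\ge1$, that is $n\ge2$, which matches the hypothesis. If one wanted the identity for $n=1$, it would also hold under the convention $a_{-2}=0$, but the statement does not require this.
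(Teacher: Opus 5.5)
Your proof is correct and follows the paper's route: the paper obtains \eqref{6a.8} by ``straightforward manipulations'' of the right-hand equation in \eqref{6a.7}, which is exactly your reindexing followed by two successive differences in $m$. Your first-difference identity is Corollary~\ref{cor:6.12a} after using $\overline{C}_2^2(2n-1)=\overline{C}_2^2(2n-2)$, and your handling of the range $n\geq 2$ is right.
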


By the left equation in \eqref{6a.7} we have 
\begin{equation}\label{6a.8a}
\overline{C}_2^2(2n+1)-\overline{C}_2^2(2n)=0\quad (n\geq 0);
\end{equation}
this can be complemented by the following identity, which is easy to obtain 
from the right equation in \eqref{6a.7}.

\begin{corollary}\label{cor:6.12a}
For all $n\geq 1$, we have
\begin{equation}\label{6a.8b}
\overline{C}_2^2(2n)-\overline{C}_2^2(2n-1)
=\sum_{k=0}^n\overline{C}_2^2(k).
\end{equation}
\end{corollary}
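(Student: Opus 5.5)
Write $a_n:=\overline{C}_2^2(n)$ and work from the right-hand equation in \eqref{6a.7}, i.e.\ $a_{2n}=\sum_{k=0}^n (k+1)a_{n-k}$. By the left equation, $a_{2n-1}=a_{2(n-1)+1}=a_{2n-2}$ for $n\geq1$. So the left-hand side of \eqref{6a.8b} is simply $a_{2n}-a_{2n-2}$. This reduces the claim to computing a first difference of the sequence $a_{2m}$.

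Next, reindex the sum so the difference becomes visible. Putting $m=n-k$ gives $a_{2n}=\sum_{m=0}^{n}(n-m+1)a_m$. Likewise $a_{2n-2}=\sum_{m=0}^{n-1}(n-m)a_m$, and we may extend this sum to $m=n$ because that term has coefficient $0$. Subtracting term by term, every coefficient differs by exactly $1$, so
\[
a_{2n}-a_{2n-2}=\sum_{m=0}^{n}a_m ,
\]
which is \eqref{6a.8b}.

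The only point needing care is the range $n\geq1$. It is needed so that $2n-1\geq1$ is of the form $2(n-1)+1$ with $n-1\geq0$, which lets us invoke \eqref{6a.8a}. It is also needed so that the formula for $a_{2n-2}$ from \eqref{6a.7} applies at index $n-1\geq0$. I expect no real obstacle beyond this bookkeeping. As a sanity check, \eqref{6a.3} gives $a_2-a_1=3-1=2=a_0+a_1$, and $a_4-a_3=10-4=6=a_0+a_1+a_2$, which agrees.
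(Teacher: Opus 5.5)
Your argument is correct and is the route the paper indicates: replace $\overline{C}_2^2(2n-1)$ by $\overline{C}_2^2(2n-2)$ using the left equation of \eqref{6a.7}, then subtract the right equation of \eqref{6a.7} at $n-1$ from the one at $n$. The only slip is in your sanity check, not the proof: \eqref{6a.3} is the $\rho=4$ sequence, so your claimed $10-4=6=a_0+a_1+a_2$ is false (that sum is $5$), whereas the correct data \eqref{6a.9} give $8-3=5=1+1+3$.
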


We illustrate the identities \eqref{6a.8}, \eqref{6a.8a}, and \eqref{6a.8b}
with an example.

\begin{example}\label{ex:6.12}
{\rm With $b=\rho=2$ in \eqref{6a.2}, we compute
\begin{gather}
\sum_{n=0}^\infty\overline{C}_2^2(n)q^n
=\prod_{j=0}^\infty\frac{1}{\left(1-q^{2^j}\right)\left(1-q^{2\cdot 2^j}\right)}\label{6a.9} \\
=1+q+3q^2+3q^3+8q^4+8q^5+16q^6+16q^7+32q^8+32q^9+56q^{10}+\dots \nonumber
\end{gather}
In addition to the obvious illustration of \eqref{6a.8a}, we see that for 
$n=5$ in \eqref{6a.8} and \eqref{6a.8b} we have $56-2\cdot 32+16=8$ and
$56-32=1+1+3+3+8+8$ respectively, as expected.}
\end{example}

In analogy to Proposition~\ref{prop:6.5}, we can vastly extend 
Corollary~\ref{cor:6.12}. To do so, we introduce the following coefficients
related to the $q$-Pochhammer symbol $(q;q)_n$, using the notation from
\cite[A231599]{OEIS}; see also \cite[A333290]{OEIS}.

\begin{definition}\label{def:6.14}
{\rm For integers $n\geq 0$ and $0\leq j\leq n(n+1)/2$, we define the
coefficients $T(n,j)$ by
\begin{equation}\label{6a.10}
(q;q)_n = \prod_{i=1}^n\left(1-q^i\right)
= \sum_{k=0}^{\frac{n(n+1)}{2}}T(n,k)q^k,
\end{equation}
with the convention that $T(0,0)=1$.}
\end{definition}

The first few coefficients $T(n,k)$ are listed in Table~1.

\bigskip
\begin{center}
{\renewcommand{\arraystretch}{1.1}
\begin{tabular}{|l|c c c c c c c c c c c|}
\hline
$n \backslash k$ & 0 & 1 & 2 & 3 & 4 & 5 & 6 & 7 & 8 & 9 & 10 \\
\hline
0 & 1 & & & & & & & & & &\\
1 & 1 & $-1$ & & & & & & & & &\\
2 & 1 & $-1$ & $-1$ & 1 & & & & & & & \\
3 & 1 & $-1$ & $-1$ & 0 & 1 & 1 & $-1$ & & & & \\
4 & 1 & $-1$ & $-1$ & 0 & 0 & 2 & 0 & 0 & $-1$ & $-1$ & 1 \\
\hline
\end{tabular}}

\medskip
{\bf Table~1}: $T(n,k)$ for $0\leq n\leq 4$.
\end{center}

\bigskip
The coefficients $T(n,k)$ have earlier been used in \cite[Theorem~1.3]{MS}, 
with a different notation. Some recurrence relations for the coefficients 
$T(n,k)$ are also given in \cite{MS}. Finally,
it is worth mentioning that \eqref{6a.10} turns into Euler's pentagonal number 
theorem if we take the limit as $n\to\infty$; see, e.g., \cite[pp.~311 ff.]{Ap}.
We can now state and prove the following result. 

\begin{proposition}\label{prop:6.15}
Let $b\geq 2$ and $\rho\geq 1$ be integers. Then the sequence
$\left(\overline{C}_b^\rho(n)\right)_{n\geq 0}$ of $\rho$-color $b$-ary 
partitions of $n$ in which the multiplicity of each part with color $i$ is a 
muliple of $i$, satisfies the identities
\begin{align}
&\sum_{j=0}^{\frac{\rho(\rho+1)}{2}}T(\rho,j)\overline{C}_b^\rho(bn-j)
=\overline{C}_b^\rho(n),\label{6a.11}\\
&\sum_{j=0}^{\frac{\rho(\rho+1)}{2}}T(\rho,j)\overline{C}_b^\rho(bn+r-j)
= 0 \qquad (1\leq r\leq b-1),\label{6a.12}
\end{align}
where we assume that $\overline{C}_b^\rho(m)=0$ for $m<0$.
\end{proposition}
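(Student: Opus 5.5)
The argument will mirror the proof of Proposition~\ref{prop:6.5}, with $(1-q)^\rho$ replaced by $(q;q)_\rho$. The starting point is the functional equation \eqref{5.9} of Corollary~\ref{cor:5.5}, applied with the parameters $Z$ of Definition~\ref{def:6.6}. By \eqref{6a.4}, the factor $\sum_n \overline{Y}_n q^n$ equals $\prod_{i=1}^\rho (1-q^i)^{-1} = 1/(q;q)_\rho$. Writing $\overline{F}(q):=\sum_{m\geq 0}\overline{C}_b^\rho(m)q^m$, equation \eqref{5.9} becomes $\overline{F}(q)=\overline{F}(q^b)/(q;q)_\rho$. Clearing the denominator and using Definition~\ref{def:6.14}, I will obtain the analogue of \eqref{6.11}:
\begin{equation*}
\left(\sum_{m=0}^\infty \overline{C}_b^\rho(m)q^m\right)
\left(\sum_{j=0}^{\rho(\rho+1)/2}T(\rho,j)q^j\right)
=\sum_{m=0}^\infty \overline{C}_b^\rho(m)q^{bm}.
\end{equation*}
The same identity can also be read off directly from \eqref{6a.2}. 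Splitting off the factor $j=0$ gives $\prod_{j\geq 0}\prod_i (1-q^{ib^j})^{-1} = (q;q)_\rho^{-1}\prod_{j\geq 0}\prod_i(1-q^{ib^{j+1}})^{-1}$, and the second product is $\overline{F}(q^b)$.

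Next I will expand the left-hand side as a Cauchy product with $n=m+j$. The coefficient of $q^n$ is then $\sum_{j=0}^{\rho(\rho+1)/2}T(\rho,j)\overline{C}_b^\rho(n-j)$, using the convention $\overline{C}_b^\rho(\ell)=0$ for $\ell<0$. The right-hand side has nonzero coefficients only at exponents divisible by $b$, and the coefficient of $q^{bm}$ there is $\overline{C}_b^\rho(m)$.

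Finally I will compare coefficients of equal powers of $q$ in two cases. For $n=bm$, the comparison gives \eqref{6a.11}, after renaming $m$ as $n$. For $n=bm+r$ with $1\leq r\leq b-1$, the right-hand side contributes zero, which gives \eqref{6a.12}.

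There is no real obstacle. The only points needing care are that convergence for $|q|<1$ justifies the manipulations, as in the proof of Proposition~\ref{prop:5.4}, and that the finiteness of $(q;q)_\rho$ makes the inner sums finite, with upper limit $\rho(\rho+1)/2$.
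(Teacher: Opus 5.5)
Your proposal is correct and follows essentially the same route as the paper: apply the functional equation \eqref{5.9} with $\sum_n\overline{Y}_nq^n=1/(q;q)_\rho$ from \eqref{6a.4}, and clear the denominator using \eqref{6a.10}. Then take the Cauchy product and compare coefficients of $q^{bm}$ and $q^{bm+r}$. Your alternative of reading the key identity directly off \eqref{6a.2} by splitting off the $j=0$ factor is just the derivation of \eqref{5.6} specialized to this case, so it is a useful self-contained check rather than a different argument.
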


\begin{proof}
We proceed as in the proof of Proposition~\ref{prop:6.5}. We consider 
\eqref{5.9} with $Z$ as in Section~\ref{sec:4.2}. Using \eqref{6a.4}, we then
obtain
\[
\left(\sum_{m=0}^\infty\overline{C}_b^\rho(m)q^m\right)
\prod_{i=1}^\rho\left(1-q^i\right)
= \sum_{m=0}^\infty\overline{C}_b^\rho(m)q^{bm}
\]
and with \eqref{6a.10},
\begin{equation}\label{6a.13}
\left(\sum_{m=0}^\infty\overline{C}_b^\rho(m)q^m\right)
\left(\sum_{j=0}^{\frac{\rho(\rho+1)}{2}}T(\rho,j)q^j\right)
= \sum_{m=0}^\infty\overline{C}_b^\rho(m)q^{bm}.
\end{equation}
Taking the Cauchy product on the left with $n=m+j$, we obtain from 
\eqref{6a.13},
\begin{equation}\label{6a.14}
\sum_{n=0}^\infty\left(\sum_{j=0}^{\frac{\rho(\rho+1)}{2}}T(\rho,j)
\overline{C}_b^\rho(n-j)\right)q^n
= \sum_{m=0}^\infty\overline{C}_b^\rho(m)q^{bm},
\end{equation}
with the understanding that $\overline{C}_b^\rho(\ell)=0$ for $\ell< 0$. 
Finally, by distinguishing between the two cases $n=bm$ and $n=bm+r$,
$1\leq r\leq b-1$, \eqref{6a.14} gives the two identities \eqref{6a.11}
and \eqref{6a.12}, respectively.
\end{proof}

As a first special case, we consider $\rho=1$. Then 
$\overline{C}_b^\rho(n)=C_b^1(n)$, the number of $b$-ary partitions of $n$.
With $T(1,0)=1$ and $T(1,1)=-1$, the identities \eqref{6a.11} and \eqref{6a.12}
reduce to \eqref{6.6} and \eqref{6.5}, respectively.

The case $\rho=2$ is more interesting and we present it as a corollary. 
We recall that for an integer $b\geq 2$, $\overline{C}_b^2(n)$ is the number of
2-color $b$-ary partitions of $n$ in which any number of parts can have color 1,
but only an even number of parts can have color 2.

\begin{corollary}\label{cor:6.16}
For any integers $b\geq 2$ and $n\geq 1$, we have
\begin{equation}\label{6a.15}
\overline{C}_b^2(bn)-2\overline{C}_b^2(bn-2)+\overline{C}_b^2(bn-4)
=\overline{C}_b^2(n),
\end{equation}
and for $b\geq 3$,
\begin{equation}\label{6a.16}
\overline{C}_b^2(bn-1)-2\overline{C}_b^2(bn-3)+\overline{C}_b^2(bn-5) = 0.
\end{equation}
In both identities we assume that $\overline{C}_b^2(m)=0$ when $m<0$.
\end{corollary}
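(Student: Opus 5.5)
This is the case $\rho=2$ of Proposition~\ref{prop:6.15}, so the plan is to specialize \eqref{6a.11} and \eqref{6a.12}. First I read off the coefficients $T(2,j)$ from Definition~\ref{def:6.14}:
\[
(1-q)(1-q^2)=1-q-q^2+q^3,
\]
so $T(2,0)=1$, $T(2,1)=-1$, $T(2,2)=-1$, $T(2,3)=1$, which agrees with Table~1. Writing $c(m):=\overline{C}_b^2(m)$, with $c(m)=0$ for $m<0$, the identities \eqref{6a.11} and \eqref{6a.12} become
\begin{align*}
&c(bn)-c(bn-1)-c(bn-2)+c(bn-3)=c(n),\\
&c(bn+r)-c(bn+r-1)-c(bn+r-2)+c(bn+r-3)=0\qquad(1\le r\le b-1).
\end{align*}
These involve consecutive arguments, whereas \eqref{6a.15} and \eqref{6a.16} involve only arguments of one parity. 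So they have to be combined.

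The natural combination comes from multiplying by $1+q$, since
\[
(1-q)(1-q^2)(1+q)=(1-q^2)^2=1-2q^2+q^4.
\]
Concretely, I add the $\rho=2$ identity at position $N$ to the same identity at position $N-1$. This gives
\[
c(N)-2c(N-2)+c(N-4)=E(N)+E(N-1),
\]
where $E(N)$ denotes the right-hand side of \eqref{6a.14} at $q^N$. By \eqref{6a.14}, $E(N)=c(N/b)$ when $b\mid N$ and $E(N)=0$ otherwise.

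For \eqref{6a.15} take $N=bn$. Then $E(bn)=c(n)$, and $N-1=bn-1=b(n-1)+(b-1)$ is not a multiple of $b$ since $b\ge2$, so $E(N-1)=0$. Here $N-1\ge1$ because $n\ge1$; even when $bn-1<0$ the series convention gives $0$ automatically.

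For \eqref{6a.16} take $N=bn-1$ with $b\ge3$. Then $N\equiv b-1\not\equiv0$ and $N-1=bn-2\equiv b-2\not\equiv0\pmod b$, and this is exactly where $b\ge3$ is needed. Hence both $E$ terms vanish.

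A cleaner way to present the same argument is at the level of generating functions. Multiply \eqref{6a.13} (with $\rho=2$) by $1+q$ to get
\[
F(q)(1-2q^2+q^4)=(1+q)F(q^b),
\]
where $F(q)=\sum_m c(m)q^m$. The right-hand side has nonzero coefficients only at exponents $\equiv0,1\pmod b$. Comparing coefficients of $q^{bn}$ then gives \eqref{6a.15}, and comparing coefficients of $q^{bn-1}$ with $b\ge3$ gives \eqref{6a.16}.

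The only real obstacle is seeing that one should multiply by $1+q$ to reach the even-step difference operator $(1-q^2)^2$. After that, the only care needed is tracking the residues mod $b$ and the negative-index convention. In particular, for $b=2$ the exponent $bn-1$ is $\equiv1$, which picks up $c(n-1)$; that is why \eqref{6a.16} is restricted to $b\ge3$.
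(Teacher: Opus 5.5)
Your proof is correct and is essentially the paper's argument: the paper adds the $\rho=2$ case of \eqref{6a.11} at $bn$ to the case $r=b-1$ of \eqref{6a.12} with $n$ replaced by $n-1$, which are your identities at positions $N=bn$ and $N-1$. For \eqref{6a.16} it likewise adds the cases $r=b-1$ and $r=b-2$, which is your $N=bn-1$ computation, and your generating-function version via $(1-q)(1-q^2)(1+q)=(1-q^2)^2$ is just a compact packaging of the same step.
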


\begin{proof}
We set $\rho=2$ in \eqref{6a.11}, obtaining
\begin{equation}\label{6a.17}
\overline{C}_b^2(bn)-\overline{C}_b^2(bn-1)-\overline{C}_b^2(bn-2)
+\overline{C}_b^2(bn-3)=\overline{C}_b^2(n),
\end{equation}
where we have used the relevant entry in Table~2. Similarly, we set $\rho=2$
and $r=b-1$ in \eqref{6a.12} and replace $n$ by $n-1$, getting
\begin{equation}\label{6a.18}
\overline{C}_b^2(bn-1)-\overline{C}_b^2(bn-2)-\overline{C}_b^2(bn-3)
+\overline{C}_b^2(bn-4)=0.
\end{equation}
By adding \eqref{6a.17} and \eqref{6a.18}, we immediately get \eqref{6a.15}. 

Finally, when $b\geq 3$, we can also take $r=b-2$ in \eqref{6a.12}. This gives
us an analogue of \eqref{6a.18}, with the arguments shifted by 1. Adding this
to \eqref{6a.18} then leads to \eqref{6a.16}.
\end{proof}

It is clear from the proof of \eqref{6a.16} that for larger $b\geq 3$, more
such 3-term recurrences can be obtained. This is related to the fact that 
\eqref{6a.12} is actually a collection of $b-1$ identities. On the other hand,
Table~1 indicates that we cannot expect simplified identies as in 
Corollary~\ref{cor:6.16} for $\rho\geq 3$.

We conclude this section with an example.

\begin{example}\label{ex:6.17}
{\rm With $b=3$ and $\rho=2$ in \eqref{6a.2}, we compute
\begin{gather}
\sum_{n=0}^\infty\overline{C}_3^2(n)q^n
=\prod_{j=0}^\infty\frac{1}{\left(1-q^{3^j}\right)\left(1-q^{2\cdot 3^j}\right)}\label{6a.19} \\
=1+q+2q^2+3q^3+4q^4+5q^5+8q^6+9q^7+12q^8+16q^9+19q^{10}+23q^{11}+\dots \nonumber
\end{gather}
With $n=3$ in \eqref{6a.15}, we have $16-2\cdot 9+5=3$, as expected, and with 
$n=4$ in \eqref{6a.16}, we have $23-2\cdot 16+9=0$, again as expected.

To illustrate the combinatorial interpretation of $\overline{C}_3^2(n)$, we
choose $n=7$ and begin with the (uncolored) base-3 partitions $3+3+1$, 
$3+1+1+1+1$, and $1+\dots+1$. These lead to the following colored partitions,
keeping in mind that only an even number of parts can have color 2:
\begin{align*}
&3_1+3_1+1_1,\quad 3_2+3_2+1_1;\\
&3_1+1_1+1_1+1_1+1_1,\quad 3_1+1_1+1_1+1_2+1_2,\quad 3_1+1_2+1_2+1_2+1_2,
\end{align*}
and four partitions, each with seven parts ``1" that have either no color 2, or
have two, four or six parts with color 2. The total number of such colored
partitions is therefore 9, which is consistent with the term $9q^7$ on the
right of \eqref{6a.19}.}
\end{example}

\end{document}